\renewcommand{\epsilon}{\varepsilon}
\renewcommand{\phi}{\varphi}
\renewcommand{\rho}{\varrho}
\newtheorem{Def}{Definition}[section]
\newenvironment{definition}{\begin{Def} \rm}{\end{Def}}
\newtheorem{lemma}[Def]{Lemma}
\newtheorem{proposition}[Def]{Proposition}
\newtheorem{corollary}[Def]{Corollary}
\newtheorem{theorem}[Def]{Theorem}
\newtheorem{example}[Def]{Example}
\newcommand{\komma}{,\hspace{0.3em}}
\newcommand{\id}{\text{id}}
\renewcommand{\leq}{\leqslant}
\renewcommand{\geq}{\geqslant}
\renewcommand{\emptyset}{\varnothing}
\newenvironment{smm}{\scriptsize \begin{pmatrix}}{\end{pmatrix}}
\newcommand{\Naturals}{{\mathbb N}}
\newcommand{\Rationals}{{\mathbb Q}}
\newcommand{\Reals}{{\mathbb R}}
\newcommand{\Quaternions}{{\mathbb H}}
\newcommand{\notperp}{\mathbin{\not\perp}}
\renewcommand{\c}{^\perp}
\newcommand{\cc}{^{\perp\perp}}
\newcommand{\herm}[2]{\left( #1 , #2 \right)}
\newcommand{\hermalt}[2]{\left( #1 , #2 \right)'}
\newcommand{\lin}[1]{\langle#1\rangle}
\newcommand{\linbig}[1]{\big\langle#1\big\rangle}
\newcommand{\linBig}[1]{\Big\langle#1\Big\rangle}
\newcommand{\withoutzero}{^{\raisebox{0.2ex}{\scalebox{0.4}{$\bullet$}}}}
\newcommand{\U}{\mathbf U}
\renewcommand{\O}{\mathbf O}
\DeclareMathOperator{\Aut}{Aut}
\newcommand{\N}{\mathbf N}
\newcommand{\SO}{\mathbf{SO}}
\newcommand{\downset}{{\downarrow}}
\begin{document}

\title{Characterisation of quadratic spaces over the Hilbert field \\ by means of the orthogonality relation}

\author{Miroslav Korbel\' a\v r}

\affil{\footnotesize
Department of Mathematics, Faculty of Electrical Engineering,  \authorcr
Czech Technical University in Prague, Technick\' a 2, 166 27 Praha, Czech Republic \authorcr
{\tt korbemir@fel.cvut.cz}}

\author{Jan Paseka}

\affil{\footnotesize
Department of Mathematics and Statistics, Masaryk~University, \authorcr
Kotl\'a\v rsk\'a 2, 611\,37 Brno, Czech Republic \authorcr
{\tt paseka@math.muni.cz}}

\author{Thomas Vetterlein}

\affil{\footnotesize
Institute for Mathematical Methods in Medicine and Data Based Modeling, \authorcr
Johannes Kepler University, Altenberger Stra\ss{}e 69, 4040 Linz, Austria \authorcr
{\tt Thomas.Vetterlein@jku.at}}

\date{\today}

\maketitle

\begin{abstract}\parindent0pt\parskip1ex

\noindent\vspace{-5ex}

An orthoset is a set equipped with a symmetric, irreflexive binary relation. With any (aniso\-tropic) Hermitian space $H$, we may associate the orthoset $(P(H),\perp)$, consisting of the set of one-dimensional subspaces of $H$ and the usual orthogonality relation. $(P(H),\perp)$ determines $H$ essentially uniquely.

We characterise in this paper certain kinds of Hermitian spaces by imposing transitivity and minimality conditions on their associated orthosets. By gradually considering stricter conditions, we restrict the discussion to a narrower and narrower class of Hermitian spaces. Ultimately, our interest lies in quadratic spaces over countable subfields of $\Reals$.

A line of an orthoset is the orthoclosure of two distinct elements. For an orthoset to be line-symmetric means roughly that its automorphism group acts transitively both on the collection of all lines as well as on each single line. Line-symmetric orthosets turn out to be in correspondence with transitive Hermitian spaces. Furthermore, quadratic orthosets are defined similarly, but are required to possess, for each line $\ell$, a group of automorphisms acting on $\ell$ transitively and commutatively. We show the correspondence of quadratic orthosets with transitive quadratic spaces over ordered fields. We finally specify those quadratic orthosets that are, in a natural sense, minimal: for a finite $n \geq 4$, the orthoset $(P(R^n),\perp)$, where $R$ is the Hilbert field, has the property of being embeddable into any other quadratic orthoset of rank~$n$.

{\it Keywords:} Orthoset; orthogonality space; Hermitian space; projective geometry; orthogeometry; Hilbert field

{\it MSC:} 81P10; 06C15; 46C05; 51A50

\mbox{}\vspace{-2ex}

\end{abstract}

\section{Introduction}
\label{sec:Introduction}

The algebraic characterisation of Hilbert spaces is an issue that has been discussed in numerous papers from different viewpoints. A major motivation has been to gain a better understanding of the role of these structures for modelling quantum phenomena \cite{EGL1,EGL2}. Particular interest has focused on the lattice-theoretic approach, which is based on the idea of describing a Hilbert space by means of the ortholattice of its closed subspaces \cite{Kal}. The program was successful to some extent; there is a way of specifying the infinite-dimensional complex Hilbert space in the first-order language of ortholattices, together with the requirement that the lattice has infinite length \cite{Wil}. A closely related idea is not to consider order-theoretic aspects but to deal with {\it orthosets}, also known as {\it orthogonality spaces}, that is, with structures based on a single binary relation assumed to be symmetric and irreflexive \cite{Dac,Wlc}. Following Foulis and his collaborators, the intention is in this case to reduce the structure of the Hilbert space to its orthogonality relation. In fact, a Hilbert space can be reconstructed from its associated orthoset and possibilities to describe this type of orthosets can be found, e.g., in \cite{Vet1,Vet2,Rum}.

However, a really satisfactory solution of this issue is for certain well-known reasons difficult. The situation can be regarded as convenient for a more general class of inner-product spaces, namely, the (anisotropic) Hermitian spaces. A lattice-theoretic description of the latter has been known for a long time \cite{MaMa} and we moreover recently established that a simple combinatorial condition characterises the corresponding orthosets \cite{Vet3,PaVe1}. However, to characterise among the division rings the classical fields remains an uncompleted project. In the lattice-theoretic framework, orthomodularity and an infinite length together with a number of technical conditions do lead to success, as we have mentioned above \cite{Wil}, but the hypotheses are hard to motivate. In the framework of orthosets of infinite rank, orthomodularity is not easily expressible and hence the most powerful tool, Sol\` er's Theorem \cite{Sol}, seems difficult to apply. Apart from that, it would be desirable to find an approach that is not restricted to the case of infinite dimensions.

Our aim is to describe the real Hilbert spaces, in particular the finite-dimensional ones, by means of orthosets. Having in mind the role of this structure in quantum physics, our focus is on symmetries: we actually regard orthosets as the underlying sets of group actions. In our previous work \cite{Vet4}, we showed that the existence of certain symmetries implies a situation that comes quite close to the prototype. The conditions imposed on the orthosets were shown to lead to quadratic spaces over (commutative) ordered fields \cite[Theorem~7]{Vet4}. The class of fields that can occur, however, is still rather wide. In a next step, it would be desirable to find reasonable conditions ensuring that the field is Archimedean. For this concern, however, the given framework seems not really suitable. Some options can be found in the preceding papers: in \cite{Vet1}, we proposed the non-existence of certain quotients and in \cite{Vet4}, we considered the quasiprimitivity of the group generated by the simple rotations.

In the present work, we choose a different approach. First of all, we show that the inner-product spaces in which we are interested can be characterised solely on the basis of transitivity conditions. Our definition of {\it line-symmetry} is modelled on concepts known from incidence geometry; see, e.g., \cite{BDD}. A line in an orthoset $(X,\perp)$ is the orthoclosure of a two-element set and $(X,\perp)$ is said to be line-symmetric if it is line-transitive and line-permutable. The former condition means that the automorphism group of $(X,\perp)$ acts transitively on the lines and the latter condition means that, for any line $\ell$, the group of automorphisms keeping the orthocomplement of $\ell$ elementwise fixed acts transitively on $\ell$. In addition, we call an orthoset Boolean if any two distinct elements are orthogonal. We show that the orthosets arising from transitive Hermitian spaces of dimension $\geq 4$ are exactly the non-Boolean line-symmetric ones. Second, we sharpen the notion of line-symmetry: rather than just requiring a transitive action on any given line, we require the transitive action to be abelian. We call orthosets of this type {\it quadratic} because it turns out that they correspond to quadratic spaces. That is, non-Boolean quadratic orthosets of rank $\geq 4$ are associated with linear spaces over a (commutative) field equipped with a symmetric bilinear form.

Finally, we are interested in the ``minimal'' members of the class of quadratic orthosets of a given rank. That is, we raise the question whether, for some finite $n$, there is a quadratic orthoset of rank $n$ embedding into any other quadratic orthoset of this rank. This turns out to be indeed the case. Let $R$ be the Hilbert field, that is, the minimal Pythagorean subfield of $\Reals$. For a finite $n \geq 4$, the orthosets $(P(R^n),\perp)$ have the desired minimality property. We additionally answer negatively the question whether also the other classes of orthosets that we consider here have, in the indicated sense, minimal members.

The paper is structured as follows. In the subsequent Section~\ref{sec:Preliminaries}, we compile facts and definitions around Hermitian spaces and their corresponding orthosets, the linear orthosets. We also indicate in which way the respective automorphism groups are mutually related. In Section~\ref{sec:Transitivity}, we deal with line-symmetric orthosets and their correspondence with transitive Hermitian spaces. In Section~\ref{sec:quadratic-orthosets}, we introduce the class of orthosets in which we are primarily interested, the quadratic orthosets, and we establish their correspondence with transitive quadratic spaces. The concluding Section~\ref{sec:Minimal-orthosets} is devoted to ``minimal'' orthosets of a given type and rank. Based on a minimality condition we get a characterisation of quadratic spaces over the Hilbert field.

\section{Hermitian spaces and their associated orthosets}
\label{sec:Preliminaries}

We begin by recalling basic definitions on the considered type of inner-product spaces. We furthermore summarise basic facts on orthosets that naturally arise from linear spaces endowed with a symmetric and anisotropic inner product.

By a {\it $\star$-sfield}, we mean a sfield (i.e., a division ring) $F$ that is equipped with an involutive antiautomorphism of $F$. A linear space $H$ over a $\star$-sfield $F$ that is equipped with a Hermitian form is called a {\it Hermitian space}. Here, a {\it Hermitian form} is meant to be a map $\herm{\cdot}{\cdot} \colon H \times H \to F$ such that, for any $u, v, w \in H$ and $\alpha, \beta \in F$, we have
\begin{align*}
& \herm{\alpha u + \beta v}{w} \;=\; \alpha \, \herm{u}{w} + \beta \, \herm{v}{w}, \\
& \herm{w}{\alpha u + \beta v} \;=\;
                               \herm{w}{u} \, \alpha^\star + \herm{w}{v} \, \beta^\star, \\
& \herm{u}{v} \;=\; \herm{v}{u}^\star, \\
& \herm{u}{u} = 0 \text{ implies } u = 0,
\end{align*}
where $^\star$ denotes the involution on $F$. We note that the last condition, the anisotropy of the form, is not usually assumed. We refer to $H$ as a {\it quadratic space} in the case when $F$ is commutative and the involution $^\star$ on $F$ is the identity.

Let $H$ be a Hermitian space. Two vectors $u, v \in H$ such that $\herm u v = 0$ are called {\it orthogonal} and we write $u \perp v$ in this case. By the {\it dimension} of a Hermitian space we mean its Hilbert dimension, which depends solely on the orthogonality relation: the supremum of the cardinalities of sets consisting of mutually orthogonal non-zero vectors in $H$. We note that the finite-dimensional spaces are exactly those of finite algebraic dimension, in which case both values coincide. We tacitly assume in this paper Hermitian spaces to have an at most countable dimension.

We put $H\withoutzero = H \setminus \{0\}$. The subspace spanned by vectors $u_1, \ldots, u_k \in H\withoutzero$ is denoted by $\lin{u_1, \ldots, u_k}$. The {\it orthocomplement} of a set $M \subseteq H$ is $M\c = \{ u \in H \colon u \perp v \text{ for any } v \in M \}$ and subspaces of this form are called {\it orthoclosed}. We moreover recall that a subspace $M$ of $H$ is called {\it splitting} if $H = M + M\c$, that is, if $H$ is the direct sum of $M$ and $M\c$. Every finite-dimensional subspace is splitting and every splitting subspace is orthoclosed.

A Hermitian space $H$ can, in a sense that we shall make precise now, be reduced to a seemingly very simple type of structure. Let us consider
\[ P(H) \;=\; \{ \lin x \colon x \in H\withoutzero \}, \]
that is, the collection of one-dimensional subspaces of $H$. We intend to characterise $H$ by means of the inner structure of $P(H)$.

Recall that an (irreducible) {\it projective space} is a set $P$ equipped with an operation $\star$ sending each pair $e, f \in P$ to a set $e \star f \subseteq P$, such that, for any $e, f, g, h \in P$, the following conditions hold: (P1) $e, f \in e \star f$, and $e \star f$ contains an element distinct from $e$ and $f$ if and only if $e \neq f$; (P2) if $g, h \in e \star f$ and $g \neq h$, then $g \star h = e \star f$; (P3) for $e, f, g, h$ pairwise distinct, we have that ${e \star f} \,\cap\, {g \star h} \neq \emptyset$ implies ${e \star g} \,\cap\, {f \star h} \neq \emptyset$. The {\it rank} of a projective space $P$ is the minimal cardinality of a set $B \subseteq P$ whose closure under $\star$ is all of $P$. For a comprehensive account of projective spaces, we refer the reader to \cite{FaFr}.

$P(H)$ is a projective space in a natural way: for $u, v \in H\withoutzero$ we define
\begin{equation} \label{fml:projective-star}
\begin{split} \lin u \star \lin v \;=\; & \{ \lin w \colon w \in \lin{u,v}\withoutzero \} \\
\;=\; & P(\lin{u,v}). \end{split}
\end{equation}
Conversely, the classical coordinatisation theorem of projective geometry asserts that any projective space of rank $\geq 4$ arises, up to isomorphism, in this way from a linear space; see \cite[Chapter~VII]{Bae} or \cite[Theorem~9.2.6]{FaFr}.

The framework of projective geometry will be useful for us in the sequel but is certainly not sufficient. Indeed, the aspect of orthogonality is not taken into account. This demand has led to the notion of an {\it orthogeometry} \cite[Chapter~14]{FaFr}. Anisotropic orthogeometries correspond to Hermitian spaces in a similar way as projective spaces correspond to linear spaces \cite[Theorem~4.1.8]{FaFr}.

In the present work, we follow a simplified approach. Rather than dealing both with linear dependence and orthogonality, we restrict to the latter aspect. For $u, v \in H$, let us define $\lin u \perp \lin v$ to hold if $u \perp v$. It turns out that from the pair $(P(H), \perp)$ alone we may already reconstruct the structure of $H$.

\begin{definition}
An {\it orthoset} is a set $X$ equipped with a symmetric, irreflexive binary relation $\perp$, called the {\it orthogonality relation}.

The supremum of the cardinalities of subsets of $X$ consisting of mutually orthogonal elements is called the {\it rank} of $(X,\perp)$.
\end{definition}

\begin{example}
By the symmetry and the anisotropy of the Hermitian form, $(P(H),\perp)$ is, for any Hermitian space $H$, an orthoset. It is moreover clear from the definitions that the rank of $(P(H),\perp)$ coincides with the dimension of $H$.
\end{example}

An orthoset $(X,\perp)$ can in a natural way be understood as a closure space. For a subset $A$ of an orthoset $(X,\perp)$, we put $A\c = \{ e \in X \colon e \perp f \text{ for all } f \in A \}$. The {\it orthoclosure}, sending each $A \subseteq X$ to $A\cc = (A\c)\c$, turns out to be a closure operator on $X$ \cite{Ern} and we call a set $A$ {\it orthoclosed} if $A$ is closed with respect to $\cc$. Note that the orthoclosed subsets are exactly those of the form $B\c$, where $B$ is any subset of $X$.

\begin{definition} \label{def:linear}
An orthoset $(X,\perp)$ is called {\it linear} if, for any distinct elements $e, f \in X$, there is a $g \in X$ such that exactly one of $f$ and $g$ is orthogonal to $e$ and $\{e,f\}\c = \{e,g\}\c$.
\end{definition}

In the sequel, the following alternative formulation of linearity will be useful.

\begin{lemma} \label{lem:linearity}
An orthoset $(X,\perp)$ is linear if and only if the following conditions hold:
\begin{itemize}

\item[\rm (L1)] For any distinct elements $e$ and $f$, there is a $g \perp e$ such that $\{e,f\}\c = \{e,g\}\c$.

\item[\rm (L2)] For any distinct elements $e$ and $f$, there is, apart from $e$ and $f$, a third element in $\{e,f\}\cc$.

\end{itemize}
\end{lemma}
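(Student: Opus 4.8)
The plan is to prove the two implications separately, disposing of the forward direction by routine bookkeeping and concentrating the real work on the converse, where a uniqueness property of lines is the crux. Throughout I would use only that $\cc$ is a closure operator, that equal orthocomplements give equal orthoclosures, and irreflexivity.

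For the forward direction (linearity $\Rightarrow$ (L1), (L2)), fix distinct $e,f$ and let $g$ be supplied by linearity, so $\{e,f\}\c=\{e,g\}\c$ and exactly one of $f,g$ is orthogonal to $e$. For (L1): if $f\perp e$ take $g=f$, and otherwise the linearity witness already satisfies $g\perp e$. For (L2): from $\{e,f\}\c=\{e,g\}\c$ we get $g\in\{e,g\}\cc=\{e,f\}\cc$, and $g$ is distinct from $f$ (opposite orthogonality status to $e$) and from $e$ (if $g=e$, then by irreflexivity $f$ is the ``one'' orthogonal to $e$, so $\{e,f\}\c=\{e\}\c$ and $f\in\{e\}\c=\{e,f\}\c$, forcing $f\perp f$); hence $g$ is a third point of $\{e,f\}\cc$.

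For the converse I would split on whether $f\perp e$. If $f\notperp e$, then (L1) gives $g\perp e$ with $\{e,f\}\c=\{e,g\}\c$, and exactly one of $f,g$ (namely $g$) is orthogonal to $e$; this case uses only (L1). The substantial case is $f\perp e$, where I must produce $g\notperp e$ with $\{e,f\}\c=\{e,g\}\c$, and the step I expect to be the main obstacle is the auxiliary uniqueness claim: writing $L=\{e,f\}\cc$, the element $f$ is the \emph{only} point of $L\setminus\{e\}$ that is orthogonal to $e$. I would prove this from (L1) and irreflexivity alone. Suppose $p\in L$, $p\perp e$, and $p\neq e,f$. First $p\notperp f$: otherwise $p\perp e$ and $p\perp f$ give $p\in\{e,f\}\c=L\c$, so $p\in L\cap L\c$ and $p\perp p$. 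Now apply (L1) to the pair $(f,p)$ to obtain $g\perp f$ with $\{f,p\}\c=\{f,g\}\c$; then $g\in\{f,p\}\cc\subseteq L$, since $f,p\in L$ and $L$ is orthoclosed. As $e\perp f$ and $e\perp p$, we have $e\in\{f,p\}\c$, whence $g\perp e$. But $g\perp e$ and $g\perp f$ give $g\in L\c$, so once more $g\in L\cap L\c$ and $g\perp g$ — a contradiction, forcing $p=f$.

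Granting uniqueness, the case $f\perp e$ closes quickly. By (L2) there is a third point $h\in L\setminus\{e,f\}$, and uniqueness gives $h\notperp e$. Applying (L1) to $(e,h)$ yields $h'\perp e$ with $\{e,h\}\c=\{e,h'\}\c$ and $h'\in\{e,h\}\cc\subseteq L$; uniqueness then forces $h'=f$, so $\{e,h\}\c=\{e,f\}\c$. Taking $g=h$ gives a witness with exactly one of $f,g$ orthogonal to $e$, as required. The genuine difficulty is isolating the uniqueness lemma and recognising that applying (L1) to the pair $(f,p)$ of two distinct non-orthogonal points of $L$ drives the resulting companion into $L\cap L\c$; everything else is formal manipulation of $\c$ and $\cc$.
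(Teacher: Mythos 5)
Your proof is correct and follows essentially the same route as the paper's: the same case split on whether $e \perp f$, and the same key auxiliary claim (your uniqueness lemma is the paper's ($\star$)), proved identically by applying (L1) to the pair $(f,p)$ and driving the resulting witness into $\{e,f\}\c \cap \{e,f\}\cc$. The only differences are cosmetic — you spell out the forward direction of (L1) in more detail and include a superfluous preliminary observation that $p \notperp f$, which the subsequent argument never uses.
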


\begin{proof}
Assume that $(X,\perp)$ is linear. Then (L1) clearly holds. Moreover, for distinct elements $e$ and $f$ of $X$, let $g \in X$ be such that exactly one of $f$ and $g$ is orthogonal to $e$ and $\{e,f\}\c = \{e,g\}\c$. Then $g \neq f$. Moreover, $g = e$ would imply $f \perp e$, that is, $f \in \{e\}\c = \{e,f\}\c$, a contradiction. Hence $g$ is a third element of $\{e,f\}\cc$ and (L2) is shown.

Conversely, assume that (L1) and (L2) hold and let $e \neq f$. If $e \notperp f$, we infer from (L1) that the condition indicated in Definition~\ref{def:linear} is fulfilled. Assume that $e \perp f$. We first show:

($\star$) If $h \perp e$ and $h \in \{e,f\}\cc$, then $h = f$.

Indeed, assume that $h$ is as indicated and distinct from $f$. By (L1), there is a $k$ such that $k \perp f$ and $\{f,h\}\c = \{f,k\}\c$. Then $k \perp \{f,h\}\c$, hence $k \perp e$ and we conclude $k \in \{e,f\}\c$. But we also have $k \in \{f,h\}\cc \subseteq \{e,f\}\cc$, a contradiction.

By (L2), there is an element $g \in \{e,f\}\cc$ distinct from $e$ and $f$. Then $g \notperp e$, because otherwise $g = f$ by ($\star$). Consequently, by (L1), there is an $h \perp e$ such that $\{e,g\}\c = \{e,h\}\c$. Thus $h \in \{e,g\}\cc \subseteq \{e,f\}\cc$ and by ($\star$), it follows $h = f$. Hence $\{e,f\}\c = \{e,g\}\c$.
\end{proof}

The reason of the choice of the notion ``linear'' becomes apparent in the following theorem~\cite[Theorem~4.5]{PaVe1}. We call a bijection $\phi$ between orthosets $(X,\perp)$ and $(Y,\perp)$ an {\it isomorphism} if, for any $e, f \in X$, $e \perp f$ is equivalent with $\phi(e) \perp \phi(f)$.

\begin{theorem} \label{thm:linear-Hermitian}
For any Hermitian space $H$, $(P(H), \perp)$ is a linear orthoset.

Conversely, for any linear orthoset $(X,\perp)$ of rank $\geq 4$, there is a Hermitian space $H$ such that $(X,\perp)$ is isomorphic to $(P(H),\perp)$.
\end{theorem}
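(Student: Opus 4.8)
The forward implication is the easy one, and the plan is to verify conditions (L1) and (L2) of Lemma~\ref{lem:linearity}. Take distinct $e = \lin u$ and $f = \lin v$ in $P(H)$, so that $u$ and $v$ are linearly independent. For (L1) I would exploit that the finite-dimensional subspace $\lin u$ is splitting, $H = \lin u \oplus \lin u\c$, and decompose $v = \lambda u + w$ with $\lambda = \herm v u\,\herm u u^{-1}$ and $w = v - \lambda u \in \lin u\c$; linear independence forces $w \neq 0$, so $g = \lin w$ satisfies $g \perp e$, and since $\lin{u,v} = \lin{u,w}$ one obtains $\{e,f\}\c = P(\lin{u,v}\c) = P(\lin{u,w}\c) = \{e,g\}\c$. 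For (L2), the plane $\lin{u,v}$ is orthoclosed, hence $\{e,f\}\cc = P(\lin{u,v})$, and $\lin{u+v}$ furnishes a third element distinct from both $\lin u$ and $\lin v$.

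The converse is the deep direction, which is exactly \cite[Theorem~4.5]{PaVe1}; here is the route I would take. The first task is to turn $(X,\perp)$ into a projective space via $e \star f = \{e,f\}\cc$. The crucial preliminary is that singletons are orthoclosed, $\{e\}\cc = \{e\}$. To see this, suppose $g \in \{e\}\cc$ with $g \neq e$; since $g$ is orthogonal to every element of $\{e\}\c$, irreflexivity excludes $g \in \{e\}\c$, so $e \notperp g$, and moreover $\{e,g\}\c = \{e\}\c$ because $g \in \{e\}\cc$. Applying (L1) to the distinct pair $e,g$ produces an $h \perp e$ with $\{e,h\}\c = \{e,g\}\c = \{e\}\c$, whence $h \in \{e\}\cc$; but $h \perp e$ then contradicts what we just showed for elements of $\{e\}\cc$. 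With $\{e\}\cc = \{e\}$ in hand, (L1) and (L2) allow one to check the projective axioms (P1)--(P3), the third point demanded by (P1) being delivered by (L2).

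Once $(X,\star)$ is a projective space of rank $\geq 4$, the classical coordinatisation theorem (\cite[Chapter~VII]{Bae}, \cite[Theorem~9.2.6]{FaFr}) yields a linear space $V$ over a sfield $F$ and an isomorphism identifying $X$ with $P(V)$. It then remains to recover $\perp$ from a form. The map $A \mapsto A\c$ is an inclusion-reversing involution on orthoclosed sets; transported to $P(V)$ it carries points to hyperplanes and is therefore a polarity of the projective space. Invoking the representation of polarities by reflexive sesquilinear forms (the Birkhoff--von Neumann theorem and its refinements, together with the orthogeometry results \cite[Chapter~14]{FaFr}, \cite[Theorem~4.1.8]{FaFr}), one obtains an involutive antiautomorphism $\star$ of $F$ and a nondegenerate reflexive sesquilinear form inducing $\perp$. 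The symmetry of $\perp$ pins down Hermitian symmetry $\herm u v = \herm v u^\star$, while irreflexivity says that no nonzero vector is orthogonal to itself, i.e.\ the form is anisotropic; this rules out the alternating case and leaves a genuine (anisotropic) Hermitian form, so $H = (V, \herm\cdot\cdot)$ does the job.

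I expect the main obstacle to be the passage in the converse from the purely relational $\perp$ to a sesquilinear form, and the certification that this form is Hermitian and anisotropic rather than merely reflexive. The delicate points are showing that $A \mapsto A\c$ is a bona fide polarity --- in particular that the orthoclosed sets coincide with the projective subspaces and that $\cc$ agrees with the double orthocomplement computed inside $V$ --- then applying the fundamental theorem for polarities, and finally using anisotropy to discard the symplectic alternative. These are precisely the steps carried out in detail in \cite{PaVe1}, which I would cite rather than reprove.
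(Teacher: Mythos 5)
The paper offers no proof of this theorem: it is imported wholesale from \cite[Theorem~4.5]{PaVe1}, with only the pointer to that reference. Your outline --- checking (L1) and (L2) of Lemma~\ref{lem:linearity} via the splitting decomposition $v = \lambda u + w$ for the forward direction, and, for the converse, establishing $\{e\}\cc=\{e\}$, coordinatising the resulting projective space, and representing $A \mapsto A\c$ as a polarity given by an anisotropic Hermitian form, deferring the detailed verifications to \cite{PaVe1} --- is correct and follows essentially the same orthogeometry route that the paper itself alludes to and that the cited source carries out.
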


We note that the description of Hermitian spaces as orthosets is closely related to their lattice-theoretic characterisation. Let ${\mathcal C}(H)$ be the collection of orthoclosed subspaces of a Hermitian space $H$. Then ${\mathcal C}(H)$ is a complete, irreducible, atomistic ortholattices with the covering property \cite[Theorems~(34.2)]{MaMa}. Moreover, up to isomorphism, every such ortholattice arises in this way, provided its length is $\geq 4$ \cite[Theorems~(34.5)]{MaMa}.

Theorem~\ref{thm:linear-Hermitian} implies that the last mentioned condition, the covering property, can be weakened in such a way that only elements of height $2$ are involved~\cite[Theorem~3.10]{PaVe1}.

\begin{corollary}
Let $L$ be a complete, irreducible, atomistic ortholattice of length $\geq 4$. Assume moreover that, for any distinct atoms $p$ and $q$ there is an atom $r \perp p$ such that $p \vee q = p \vee r$. Then there is a Hermitian space $H$ such that $L$ is isomorphic to ${\mathcal C}(H)$.
\end{corollary}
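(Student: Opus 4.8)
The plan is to realise $L$ as the ortholattice of orthoclosed subsets of a suitable orthoset, namely the orthoset formed by its atoms, and then to invoke Theorem~\ref{thm:linear-Hermitian}. Write $X$ for the set of atoms of $L$ and, for $p, q \in X$, declare $p \perp q$ to mean $p \leq q\c$; this relation is symmetric and irreflexive, so $(X,\perp)$ is an orthoset. Because $L$ is complete and atomistic, a set of mutually orthogonal atoms is an independent family whose cardinality is bounded by, and can be made to realise, the length of $L$; hence the rank of $(X,\perp)$ equals the length of $L$ and is in particular $\geq 4$. I would then set up the order isomorphism $\eta \colon L \to \mathcal P(X)$, $a \mapsto \eta(a) = \{ p \in X \colon p \leq a \}$. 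Atomisticity gives $a = \bigvee \eta(a)$, so $\eta$ is injective and order-reflecting, and using completeness together with the De Morgan laws of $L$ one checks that $\eta(a)\c = \eta(a\c)$ in the orthoset sense; in particular every $\eta(a)$ is orthoclosed, while conversely any orthoclosed $A \subseteq X$ equals $\eta(\bigvee A)$. Thus $\eta$ is an order- and orthocomplementation-preserving bijection from $L$ onto the lattice $\mathcal C(X,\perp)$ of orthoclosed subsets of $(X,\perp)$.

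Next I would verify that $(X,\perp)$ is linear by checking (L1) and (L2) of Lemma~\ref{lem:linearity}. Condition (L1) is precisely the hypothesis: for distinct atoms $p \neq q$, the given atom $r \perp p$ with $p \vee q = p \vee r$ satisfies $\{p,q\}\c = \eta((p\vee q)\c) = \eta((p\vee r)\c) = \{p,r\}\c$. For (L2) one needs, for distinct $p, q$, a third atom below $p \vee q = \{p,q\}\cc$. When $p \notperp q$ this is immediate, since the atom $r$ furnished by (L1) lies below $p \vee q$, is orthogonal to $p$, and hence differs from both $p$ and $q$.

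The remaining case of (L2), namely two orthogonal atoms $p \perp q$, is where I expect the real work, and it is the only place irreducibility enters. I would first use irreducibility to show that the ``non-orthogonality graph'' on $X$ (with an edge $u \notperp v$ whenever $u \neq v$) is connected: if its atoms split into a nonempty class $C$ and a nonempty remainder $D$, then every atom of $C$ is orthogonal to every atom of $D$, whence atomisticity forces $\bigvee C$ to be a central element distinct from $0$ and $1$, contradicting irreducibility. Connectivity then lets me join $p$ to $q$ by a path of pairwise non-orthogonal atoms, and an incidence argument inside the height-$3$ element $p \vee q \vee t$ (for an intermediate atom $t$), repeatedly applying the (L1)-hypothesis to the non-orthogonal pairs to ``rotate'' orthogonality, should produce a third atom on the line $p \vee q$. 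This reduction of the orthogonal case to the already-settled non-orthogonal case is the main obstacle: the delicate point is to carry out the incidence bookkeeping in $L$ directly, without assuming modularity of the interval and without circularly invoking the coordinatisation one is trying to establish.

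Finally, once $(X,\perp)$ is known to be a linear orthoset of rank $\geq 4$, Theorem~\ref{thm:linear-Hermitian} furnishes a Hermitian space $H$ with $(X,\perp) \cong (P(H),\perp)$. Isomorphic orthosets have isomorphic lattices of orthoclosed subsets, and for $(P(H),\perp)$ this lattice is precisely $\mathcal C(H)$. Combining this with the isomorphism $\eta$ of the first step yields $L \cong \mathcal C(X,\perp) \cong \mathcal C(P(H),\perp) \cong \mathcal C(H)$, which is the desired conclusion.
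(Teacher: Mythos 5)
Your overall route is the intended one: the paper derives this corollary from Theorem~\ref{thm:linear-Hermitian} together with the lattice--orthoset correspondence of \cite[Theorem~3.10]{PaVe1}, i.e.\ precisely by passing to the orthoset of atoms, showing it is linear, and transferring the resulting representation back to $L$. Your set-up of the orthoset $(X,\perp)$ of atoms, the isomorphism $\eta$ onto the lattice of orthoclosed subsets, the verification of (L1), the non-orthogonal case of (L2), and the final appeal to Theorem~\ref{thm:linear-Hermitian} are all correct; your observation that irreducibility yields connectedness of the non-orthogonality graph is also sound and is indeed where irreducibility must enter.

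However, there is a genuine gap exactly where you place it: condition (L2) for an \emph{orthogonal} pair of atoms $p \perp q$. The hypothesis of the corollary is vacuous for such a pair (one may take $r = q$), so producing a third atom below $p \vee q$ requires importing information from non-orthogonal pairs, and your proposal for doing so --- join $p$ to $q$ by a path in the non-orthogonality graph and run ``an incidence argument inside the height-$3$ element $p \vee q \vee t$'' that ``should produce a third atom'' --- is a hope, not a proof. As you yourself note, without the covering property (which is exactly the assumption this corollary is meant to weaken) you have no control over the height of $p \vee q \vee t$ or over which atoms lie below it, so the ``rotation'' step is not available as described. This is not a routine verification: it is essentially the content of the result the paper cites, and a correct argument has to combine (L1) --- in particular its consequence, established in the proof of Lemma~\ref{lem:linearity}, that on a line through $p \perp q$ the only atom orthogonal to $p$ is $q$ --- with irreducibility in a global way. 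A smaller issue of the same kind: your claim that the rank of $(X,\perp)$ equals the length of $L$ also needs an argument in the absence of the covering property (extracting four pairwise orthogonal atoms from a chain of length $4$ again requires (L1) to orthogonalise and is not automatic in a general atomistic ortholattice). Until the orthogonal case of (L2) and the rank computation are actually carried out, the proof is incomplete.
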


We now turn to structure-preserving maps between linear orthosets and their relationship to maps between the corresponding Hermitian spaces. We are here exclusively interested in maps that preserve the orthogonality relation. For a more general approach, based on the concept of adjointability of maps, we refer the reader to the paper \cite{PaVe2}.

A map $\phi \colon X \to Y$ between orthosets is called {\it $\perp$-preserving} if, for any $e, f \in X$, $e \perp f$ implies $\phi(e) \perp \phi(f)$, and $\phi$ is called {\it $\perp$-reflecting} if, for any $e, f \in X$, $\phi(e) \perp \phi(f)$ implies $e \perp f$. We call a $\perp$-preserving and $\perp$-reflecting injection an {\it embedding}. Note that an isomorphism is the same as a surjective embedding.

Furthermore, let $V_1$ be a linear space over the sfield $F_1$ and $V_2$ a linear space over the sfield $F_2$. A map $T \colon V_1 \to V_2$ is called {\it semilinear} if $T(u+v) = T(u) + T(v)$ for any $u, v \in V_1$ and there is a homomorphism $\sigma \colon F_1 \to F_2$ such that $T(\alpha u) = \alpha^\sigma \, T(u)$ for any $u \in V_1$ and $\alpha \in F_1$; cf.~Figure~\ref{fig:semiisometry}. If in this case $\sigma$ is an isomorphism, $T$ is called {\it quasilinear}.

Let now $H_1$ be a Hermitian space over the $\star$-sfield $F_1$ and $H_2$ a Hermitian space over the $\star$-sfield $F_2$. We will mark the involutions with the respective index $1$ or $2$, and similarly for the Hermitian forms. We call $T \colon H_1 \to H_2$ a {\it semiisometry} if $T$ is semilinear with associated homomorphism $\sigma$ and there is a $\lambda \in F_2 \setminus \{0\}$ such that, for any $u, v \in H_1$,
\begin{equation} \label{fml:quasiunitary}
\herm{T(u)}{T(v)}_2 \;=\; {\herm{u}{v}_1}^\sigma \; \lambda;
\end{equation}
cf.~again Figure~\ref{fig:semiisometry}.

\begin{figure}[h!]
\vspace{2ex}
\begin{center}
\begin{tabular}{c c c }
\begin{tikzcd}
H_1 \times H_1 \arrow[r, "{+}_1"] \arrow[d, "T\times T"'] & %
H_1 \arrow[d, "T"] \\
H_2 \times H_2 \arrow[r, "{+}_2"] & H_2 \\
\end{tikzcd}& 
\begin{tikzcd}
F_1 \times H_1 \arrow[r, "\cdot_1"] \arrow[d, "{(-)}^\sigma\times T"'] & %
H_1 \arrow[d, "T"] \\
F_2 \times H_2 \arrow[r, "\cdot_2"] & H_2 \\
\end{tikzcd}& %
\begin{tikzcd}
H_1 \times H_1 \arrow[rr, "{(-,-)}_1"] \arrow[d, "T\times T"'] & &%
F_1 \arrow[r, "{(-)}^\sigma"] &F_2\arrow[dl, "{(-)\cdot \lambda}"]\\
H_2 \times H_2 \arrow[rr, "{(-,-)}_2"] & &F_2 \\
\end{tikzcd}
\end{tabular}
\end{center}
\vspace{-4ex}
\caption{For a map $T \colon H_1 \to H_2$ between Hermitian spaces to be semilinear means that the diagrams on the left and in the middle commute. Here, the operations in $H_1$ and $H_2$ are marked with subscripts $1$ and $2$, respectively. For $T$ to be a semiisometry means that in addition the diagram on the right commutes.}
\label{fig:semiisometry}
\vspace{2ex}
\end{figure}

We emphasise that the notion of a homomorphism $\sigma \colon F_1 \to F_2$ between $\star$-sfields refers to $F_1$ and $F_2$ as sfields, that is, the preservation of the involution is not required. In the present case, the involutions are by (\ref{fml:quasiunitary}) rather correlated by the equation
\begin{equation} \label{fml:quasiunitary-noch}
\alpha^{\sigma\star_2} \;=\; \lambda^{-1} \alpha^{\star_1 \sigma} \lambda
\end{equation}
for any $\alpha \in F_1$. We call $\sigma \colon F_1 \to F_2$ a {\it $\star$-homomorphism} in case when $\sigma$ is a homomorphism that does preserve the involution, which means that $\alpha^{\star_1 \sigma} = \alpha^{\sigma \star_2}$ for any $\alpha \in F_1$.

A semiisometry $T \colon H_1 \to H_2$ is called a {\it quasiisometry} if the associated homomorphism $\sigma$ is an isomorphism (of sfields), that is, if $T$ is quasilinear. A surjective quasiisometry is called {\it quasiunitary}. Note that any semiisometry is injective and hence a quasiunitary map is bijective. Finally, assume that the $\star$-sfields $F_1$ and $F_2$ coincide. Then $T$ is called {\it unitary} if $T$ is a linear isomorphism preserving the Hermitian form. That is, a unitary map is a quasiunitary map such that in (\ref{fml:quasiunitary}) we have $\sigma = \id$ and $\lambda = 1$.

We observe that semiisometries give rise to embeddings of orthosets.

\begin{lemma} \label{lem:semiisometry-induces-embedding}
Let $T \colon H_1 \to H_2$ be a semiisometry between Hermitian spaces. Then
\[ P(T) \colon P(H_1) \to P(H_2) \komma \lin u \mapsto \lin{T(u)} \]
is an embedding of orthosets.

If \/ $T$ is quasiunitary, then $P(T)$ is an isomorphism.
\end{lemma}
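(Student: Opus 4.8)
The plan is to verify directly that $P(T)$ is a well-defined map, that it preserves and reflects orthogonality, and that it is injective; surjectivity in the quasiunitary case then handles the final claim via the remark that an isomorphism is precisely a surjective embedding.

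First I would check that $P(T)$ is well-defined, i.e.\ independent of the choice of representative. If $\lin u = \lin{u'}$, then $u' = \alpha u$ for some $\alpha \in F_1 \setminus \{0\}$, and by semilinearity $T(u') = \alpha^\sigma T(u)$. Since $\sigma$ is a homomorphism of sfields it sends nonzero elements to nonzero elements, so $\lin{T(u')} = \lin{T(u)}$; this also records that $T$ maps $H_1\withoutzero$ into $H_2\withoutzero$, using anisotropy together with the fact that $T(u) = 0$ would force $\herm{T(u)}{T(u)}_2 = 0$ and hence, by (\ref{fml:quasiunitary}), $\herm{u}{u}_1^\sigma \lambda = 0$, giving $\herm{u}{u}_1 = 0$ and $u = 0$. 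Next, the core step: for $u, v \in H_1\withoutzero$, formula (\ref{fml:quasiunitary}) gives $\herm{T(u)}{T(v)}_2 = \herm{u}{v}_1^\sigma \lambda$. Since $\lambda \neq 0$ and $\sigma$ is injective with $0^\sigma = 0$, the right-hand side vanishes if and only if $\herm{u}{v}_1 = 0$. Thus $T(u) \perp T(v)$ exactly when $u \perp v$, which is simultaneously the $\perp$-preserving and $\perp$-reflecting property of $P(T)$ at the level of one-dimensional subspaces.

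It then remains to establish injectivity of $P(T)$. Suppose $\lin{T(u)} = \lin{T(v)}$ with $u, v \neq 0$; I would argue that $u$ and $v$ are linearly dependent. One clean route is to use that $\perp$-reflection already forces $u \notperp v$ and more: pick any $w \perp u$; then $T(w) \perp T(u)$, and since $T(v)$ is a scalar multiple of $T(u)$ we get $T(w) \perp T(v)$, whence $w \perp v$ by reflection. Thus $\{u\}\c = \{v\}\c$ inside $H_1$, and by anisotropy together with the nondegeneracy this yields $\lin u = \lin v$; alternatively, one computes $T(v - \beta u) = 0$ for a suitable scalar $\beta$ and invokes the injectivity of $T$ established from anisotropy. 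Either way $\lin u = \lin v$, so $P(T)$ is injective and hence an embedding.

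For the final sentence, if $T$ is quasiunitary then it is in particular surjective, so every element of $P(H_2)$ is of the form $\lin{T(u)}$, making $P(T)$ surjective; combined with the embedding property and the earlier observation that a surjective embedding is an isomorphism, $P(T)$ is an isomorphism. The main obstacle I anticipate is the injectivity argument: getting from $\lin{T(u)} = \lin{T(v)}$ back to $\lin u = \lin v$ requires using anisotropy (to guarantee $T$ is injective on vectors) rather than just the orthogonality bookkeeping, and one must be careful that $\sigma$ being merely a homomorphism (not assumed surjective) suffices here — which it does, since injectivity of a sfield homomorphism is automatic.
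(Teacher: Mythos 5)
Your proposal is correct and follows essentially the same route as the paper: $\perp$-preservation and $\perp$-reflection come directly from (\ref{fml:quasiunitary}), and injectivity is obtained by showing $\{u\}\c = \{v\}\c$ and using that one-dimensional subspaces are orthoclosed. (Only your parenthetical alternative for injectivity --- finding $\beta$ with $T(v-\beta u)=0$ --- would need care, since the required scalar need not lie in the image of $\sigma$ when $\sigma$ is not surjective; your main argument does not rely on it.)
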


\begin{proof}
Note first that by semilinearity, $P(T)$ is actually definable in the indicated way. Because of (\ref{fml:quasiunitary}), $P(T)$ is $\perp$-preserving and $\perp$-reflecting. Finally, let $u, v \in H_1\withoutzero$ be such that $P(T)(\lin u) = P(T)(\lin v)$. Then for any $w \in H_1$ we have $w \perp u$ iff $T(w) \perp T(u)$ iff $T(w) \perp T(v)$ iff $w \perp v$. Thus $\lin u\c = \lin v\c$ and since one-dimensional subspaces are orthoclosed, it follows $\lin u = \lin v$ and the injectivity of $P(T)$ is shown.

If $T$ is surjective, then so is $P(T)$. Thus the additional assertion follows.
\end{proof}

\begin{example}
We may see at this point why we consider more general maps between Hermitian space than just the linear ones. Consider the case of a complex Hilbert space $H$. Any unitary map $U \colon H \to H$ induces the automorphism $P(U)$ of $P(H)$. But not every automorphism arises in this way. Let $(e_\iota)_{\iota \in I}$ be an orthonormal basis of $H$ and define $U\big(\sum_{\iota \in I} \alpha_\iota e_\iota\big) = \sum_{\iota \in I} \bar\alpha_\iota e_\iota$. Then $U$ is quasiunitary and likewise induces an automorphism of $P(H)$.
\end{example}

We next turn to a converse of Lemma~\ref{lem:semiisometry-induces-embedding}, which can be found on the basis of Piziak's work \cite{Piz}. The subsequent generalisation of Piziak's theorem \cite[Theorem~(2.2)]{Piz} was shown in \cite{PaVe2}.

\begin{theorem} \label{thm:Piziak-original}
Let $H_1$ be an at least two-dimensional Hermitian space over $F_1$ and let $H_2$ be a Hermitian space over $F_2$. Let $T \colon H_1 \to H_2$ be a non-zero semilinear map and assume that $x \perp y$ implies $T(x) \perp T(y)$ for any $x, y \in H_1$. Then $T$ is a semiisometry.
\end{theorem}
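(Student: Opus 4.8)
The plan is to transport the problem to a single proportionality statement between two sesquilinear forms on $H_1$ and then to read off the scalar $\lambda$, together with the compatibility \eqref{fml:quasiunitary-noch}, by testing orthogonality inside the coordinate planes of an orthogonal basis.

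First I would record the algebraic shape of the form pulled back along $T$. Let $\sigma$ be the homomorphism associated with the semilinear map $T$; evaluating $T$ at a vector on which it does not vanish shows $1^\sigma = 1$, so $\sigma$ is a nonzero, hence injective, homomorphism of sfields. Define $g \colon H_1 \times H_1 \to F_2$ by $g(u,v) = \herm{T(u)}{T(v)}_2$ and $f \colon H_1 \times H_1 \to F_2$ by $f(u,v) = \herm{u}{v}_1^\sigma$. Unwinding the semilinearity of $T$ and the form axioms, both $f$ and $g$ are biadditive, satisfy $f(\alpha u, v) = \alpha^\sigma f(u,v)$ and $g(\alpha u, v) = \alpha^\sigma g(u,v)$, while $f(u, \beta v) = f(u,v)\,\beta^{\star_1 \sigma}$ and $g(u, \beta v) = g(u,v)\,\beta^{\sigma \star_2}$. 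The hypothesis that $T$ preserves orthogonality says precisely that $f(u,v) = 0$ implies $g(u,v) = 0$ (here injectivity of $\sigma$ gives $\herm{u}{v}_1 = 0 \iff f(u,v) = 0$), and the conclusion to be proved is exactly that there is a $\lambda \in F_2 \setminus \{0\}$ with $g = f\lambda$, which is \eqref{fml:quasiunitary}.

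Next, since the form on $H_1$ is anisotropic and $H_1$ has at most countable dimension, Gram--Schmidt furnishes an orthogonal basis $(e_i)_i$; I write $a_i = \herm{e_i}{e_i}_1 \neq 0$ and $b_i = \herm{T(e_i)}{T(e_i)}_2$, and note that the vectors $T(e_i)$ are pairwise orthogonal. As $\dim H_1 \geq 2$ there are at least two indices. Fixing $i \neq j$ and, for each $\gamma \neq 0$, the unique $\delta$ with $e_i + \gamma e_j \perp e_i + \delta e_j$, the expansion of this orthogonality in $H_1$ reads $a_i + \gamma a_j \delta^{\star_1} = 0$, while applying the hypothesis to this pair and expanding $\herm{T(e_i) + \gamma^\sigma T(e_j)}{T(e_i) + \delta^\sigma T(e_j)}_2 = 0$ reads $b_i + \gamma^\sigma b_j\, \delta^{\sigma \star_2} = 0$. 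A short argument rules out $b_i = 0$: were some $b_i$ zero, the second equation would force $b_j = 0$ for all $j$, hence $T = 0$, contrary to assumption; thus every $T(e_i) \neq 0$ and $T$ is injective on the basis. Setting $\lambda_i = a_i^{-\sigma} b_i \in F_2 \setminus \{0\}$, substituting $b_k = a_k^\sigma \lambda_k$ into the second equation and using the first (after applying $\sigma$) to eliminate the common factor leaves $\lambda_j\, \delta^{\sigma \star_2} = \delta^{\star_1 \sigma}\, \lambda_i$. As $\gamma$ runs through $F_1 \setminus \{0\}$ the scalar $\delta$ runs through all of $F_1 \setminus \{0\}$; the test value $\delta = 1$ gives $\lambda_i = \lambda_j$, so the $\lambda_i$ share a common value $\lambda$, and feeding this back yields $\delta^{\sigma \star_2} = \lambda^{-1} \delta^{\star_1 \sigma} \lambda$ for all $\delta$, that is, \eqref{fml:quasiunitary-noch}.

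It then remains to assemble the global formula. For arbitrary $u = \sum_i \beta_i e_i$ and $v = \sum_i \gamma_i e_i$ (finite sums), orthogonality of the $e_i$ and of the $T(e_i)$ gives $f(u,v) = \sum_i \beta_i^\sigma a_i^\sigma \gamma_i^{\star_1 \sigma}$ and $g(u,v) = \sum_i \beta_i^\sigma b_i \gamma_i^{\sigma \star_2}$; inserting $b_i = a_i^\sigma \lambda$ and $\gamma_i^{\sigma \star_2} = \lambda^{-1} \gamma_i^{\star_1 \sigma} \lambda$ collapses the second sum to $f(u,v)\lambda$, so $\herm{T(u)}{T(v)}_2 = \herm{u}{v}_1^\sigma \lambda$ with $\lambda \neq 0$ and $T$ is a semiisometry. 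The main obstacle I anticipate is conceptual rather than computational: $f(u,\cdot)$ and $g(u,\cdot)$ are semilinear with respect to the two a priori different anti-homomorphisms $\star_1 \sigma$ and $\sigma \star_2$, so the forms cannot be compared na\"ively. Confining the whole computation to the two-dimensional coordinate planes is exactly what makes the constancy of the $\lambda_i$ and the conjugation relation \eqref{fml:quasiunitary-noch} emerge together from the single substitution $\delta = 1$; verifying that $\delta$ indeed sweeps out all of $F_1 \setminus \{0\}$ (so that \eqref{fml:quasiunitary-noch} holds identically, not merely on a subset) is the one point that needs a little care.
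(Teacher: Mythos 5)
Your two-dimensional computation is the classical Piziak-style argument and is carried out correctly, including the points that genuinely need care in the non-commutative setting: the identity $\lambda_j\,\delta^{\sigma\star_2}=\delta^{\star_1\sigma}\lambda_i$ is derived with the factors in the right order, $\delta=1$ is attainable (take $\gamma=-a_ia_j^{-1}$), and $\delta$ does sweep all of $F_1\setminus\{0\}$ because $\gamma\mapsto -a_j^{-1}\gamma^{-1}a_i$ is a bijection of $F_1\setminus\{0\}$. The paper itself gives no argument for this theorem but defers to \cite[Theorem~4.5]{PaVe2}, so there is no in-text proof to compare with; your route is the expected one.

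There is, however, one step that fails in general: ``Gram--Schmidt furnishes an orthogonal basis $(e_i)_i$'' with respect to which every vector is a \emph{finite} linear combination. The theorem places no finiteness assumption on $H_1$, and the paper's standing hypothesis bounds only the \emph{Hilbert} dimension (the supremum of cardinalities of orthogonal families); the algebraic dimension may be strictly larger, and then no orthogonal Hamel basis exists. The real Hilbert space $\ell^2$ is the standard example: every orthogonal family is countable while every Hamel basis is uncountable, so no orthogonal family spans the space algebraically, and transfinite Gram--Schmidt breaks down because a vector need not be orthogonal to cofinitely many of the previously constructed ones. Your final assembly step therefore does not apply to such spaces, and your non-vanishing argument ($b_i=0$ forces $T=0$) also silently uses that the $e_i$ span $H_1$. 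The repair is routine but should be stated: fix an orthogonal pair $e_1\perp e_2$, verify $T(e_1),T(e_2)\neq 0$ by running your pairwise argument inside an orthogonal basis of the finite-dimensional subspace $\lin{e_1,e_2,u_0}$ for some $u_0$ with $T(u_0)\neq 0$, and extract $\lambda$ and the relation (\ref{fml:quasiunitary-noch}) from that pair exactly as you do. Then, for arbitrary $u,v\in H_1$, perform the identical computation inside the finite-dimensional subspace $\lin{e_1,e_2,u,v}$, which does admit an orthogonal basis whose first two members are $e_1,e_2$; the pairwise comparisons within that basis show that its local constant coincides with $\lambda$, and the finite-sum expansion then yields (\ref{fml:quasiunitary}) for $u$ and $v$. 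With that localisation your argument is complete.
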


\begin{proof}
See \cite[Theorem~4.5]{PaVe2}.
\end{proof}

\begin{theorem} \label{thm:embedding-of-projective-Hermitian-spaces}
Let $H_1$ and $H_2$ be Hermitian spaces of the same dimension $n \geq 3$ and let $\phi \colon P(H_1) \to P(H_2)$ be an embedding of orthosets.
\begin{itemize}

\item[\rm (i)] Assume that $n$ is finite. Then there is a semiisometry $T \colon H_1 \to H_2$ such that $\phi = P(T)$.

\item[\rm (ii)] Assume that $\phi$ is actually an isomorphism. Then there is a quasiunitary map $T \colon H_1 \to H_2$ such that $\phi = P(T)$.

\end{itemize}
\end{theorem}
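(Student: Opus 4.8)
The plan is to reduce the statement to two results that do the real work: the fundamental theorem of projective geometry, which realises a suitable map of projective spaces as the projectivisation $P(T)$ of a semilinear map $T$, and Piziak's Theorem~\ref{thm:Piziak-original}, which upgrades an orthogonality-preserving semilinear map to a semiisometry. The entire difficulty is thereby concentrated in showing that $\phi$ is a morphism of the underlying projective spaces, that is, that it carries lines into lines. The bridge between the two frameworks is the observation, implicit in Section~\ref{sec:Preliminaries}, that a line $\lin u \star \lin v = P(\lin{u,v})$ of $P(H)$ coincides with the orthoset-orthoclosure $\{\lin u, \lin v\}\cc$, because the finite-dimensional subspace $\lin{u,v}$ is splitting and hence orthoclosed. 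Collinearity is therefore an orthoset-theoretic notion, and the task is to show that $\phi$ respects it.

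For part (i), with $n$ finite, I would first note that a set of mutually orthogonal elements of $P(H_i)$ has at most $n$ members, and that the injective, $\perp$-preserving map $\phi$ sends an $n$-element orthogonal set to an $n$-element orthogonal set, which is then necessarily maximal in $P(H_2)$. More generally, for distinct collinear $\lin u, \lin v, \lin w$, the orthocomplement $\{\lin u, \lin v, \lin w\}\c = P(\lin{u,v}\c)$ has rank $n-2$; applying $\phi$ to a maximal orthogonal subset of it yields $n-2$ mutually orthogonal elements of $\{\phi(\lin u), \phi(\lin v), \phi(\lin w)\}\c$, so the latter has rank at least $n-2$. Since the three distinct image points span a subspace of dimension at least $2$, the orthocomplement of their span has rank at most $n-2$; hence it equals $n-2$, the span is two-dimensional, and $\phi(\lin w) \in \phi(\lin u) \star \phi(\lin v)$. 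Thus $\phi$ maps lines into lines. For part (ii), where $\phi$ is an isomorphism, this is immediate: being a bijection that preserves and reflects $\perp$, $\phi$ preserves the orthoclosure operator and so maps lines onto lines.

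It then remains to apply the fundamental theorem. The image of $\phi$ contains the images of an orthogonal basis of $H_1$, namely $n$ mutually orthogonal points, which span $H_2$; hence the image lies in no proper subspace, and, being injective and line-preserving, $\phi$ is a morphism of projective spaces in the sense of \cite{FaFr}. Since $P(H_1)$ and $P(H_2)$ arise from linear spaces and are therefore coordinatisable (Desarguesian), the fundamental theorem provides a semilinear map $T \colon H_1 \to H_2$ with $\phi = P(T)$. This $T$ is non-zero, and the implication $\lin x \perp \lin y \Rightarrow \phi(\lin x) \perp \phi(\lin y)$ forces $T$ to be orthogonality-preserving; Theorem~\ref{thm:Piziak-original} then yields that $T$ is a semiisometry, proving (i). For (ii), the surjectivity of the isomorphism $\phi$ makes the associated homomorphism $\sigma$ surjective, so $T$ is quasilinear and, being onto, quasiunitary.

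The main obstacle is the collinearity step of part (i): because $\phi$ need not be surjective, one cannot simply transport orthoclosures along $\phi$, and it is precisely here that finite-dimensionality is indispensable, since it is the rank (dimension) bookkeeping above that pins down that collinear triples go to collinear triples. A secondary point demanding care is the planar case $n=3$, where the abstract fundamental theorem fails for general projective planes; we sidestep this by using that $P(H_1)$ and $P(H_2)$ are already coordinatised by the given Hermitian spaces, so that the semilinear representation of $\phi$ remains available.
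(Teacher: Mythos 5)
Your proposal is correct and follows essentially the same route as the paper: reduce to the Fundamental Theorem of Projective Geometry plus Theorem~\ref{thm:Piziak-original}, with all the substance lying in showing that $\phi$ preserves collinearity. Your collinearity step for (i) counts ranks of orthocomplements of the three collinear points, whereas the paper transports an orthogonal basis of $H_1$ adapted to the plane $\lin{v,w}$; these are the same finite-dimensional orthogonality-counting argument in slightly different clothing, and part (ii) is handled identically (lines are orthoclosures, which an isomorphism preserves).
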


\begin{proof}
(i): Equipped with the operation $\star$ given by (\ref{fml:projective-star}), $P(H_1)$ and $P(H_2)$ are projective spaces. Note that $\lin u \star \lin v = \{\lin u, \lin v\}\cc$ because two-dimensional subspaces are splitting and hence orthoclosed.

Let $u, v, w \in H_1$ such that $\lin u \in \lin v \star \lin w$. We claim that $\phi(\lin u) \in \phi(\lin v) \star \phi(\lin w)$. Indeed, if $\lin v = \lin w$, this is trivial. Otherwise, let $w' \in H_1$ be such that $w' \perp v$ and $\lin{v,w} = \lin{v,w'}$, and let $v, w', e_1, \ldots, e_{n-2}$ be an orthogonal basis of $H_1$. Then $\phi(\lin v), \phi(\lin{w'}), \phi(\lin{e_1}), \ldots, \phi(\lin{e_{n-2}})$ is a decomposition of $H_2$ into pairwise orthogonal one-dimensional subspaces. Moreover, $u \perp e_1, \ldots, e_{n-2}$ implies $\phi(\lin u) \perp \phi(\lin{e_1}), \ldots, \phi(\lin{e_{n-2}})$ and hence $\phi(\lin u) \in \phi(\lin v) \star \phi(\lin{w'})$. Similarly, also $\phi(\lin w) \in \phi(\lin v) \star \phi(\lin{w'})$ and since, by injectivity, $\phi(\lin v) \neq \phi(\lin w)$, we conclude $\phi(\lin u) \in \phi(\lin v) \star \phi(\lin w)$ as asserted.

Moreover, the image of $\phi$ contains three mutually orthogonal elements. We conclude that $\phi$ is a non-degenerate morphism between projective spaces in the sense of Faure and Fr\" olicher \cite[6.2.1]{FaFr}.  By the Fundamental Theorem of Projective Geometry \cite[10.1.3]{FaFr}, there is a semilinear map $T$ such that $\phi = P(T)$. By Theorem~\ref{thm:Piziak-original}, $T$ is in fact a semiisometry.

(ii): For any $v, w \in H_1$, we have that $\phi(\lin v \star \lin w) = \phi(\lin v) \star \phi(\lin w)$. It follows as in part (i) that there is a semiisometry $T$ such that $\phi = P(T)$. Moreover, $\phi$ is a homomorphism in the sense of Faure and Fr\" olicher \cite[6.5.1, 6.5.2, 6.5.5]{FaFr}. By \cite[10.1.4]{FaFr}, it follows that $T$ is quasilinear, that is, a quasiisometry. From the bijectivity of $\phi$ it readily follows that also $T$ is bijective. Hence $T$ is quasiunitary.
\end{proof}

Theorem~\ref{thm:embedding-of-projective-Hermitian-spaces}(ii) indicates in which sense an inner-product space representing an orthoset according to Theorem~\ref{thm:linear-Hermitian} is essentially unique. We call Hermitian spaces $H_1$ and $H_2$ {\it isomorphic} if there is a unitary map between $H_1$ and $H_2$. Let us call $H_1$ and $H_2$ {\it equivalent} if there is a quasiunitary map between $H_1$ and $H_2$. Provided that dimensions are $\geq 3$, we have that $(P(H_1),\perp)$ and $(P(H_2),\perp)$ are isomorphic exactly if $H_1$ and $H_2$ are equivalent.

We note that ``scaling'' the Hermitian form leads to an equivalent space. Indeed, let $H$ be a Hermitian space over the $\star$-sfield $F$ and let $\lambda \in F$ be such that $\lambda \neq 0$ and $\lambda^\star = \lambda$. Define $\hermalt{\cdot}{\cdot} = \herm{\cdot}{\cdot} \lambda$. Then $\hermalt{\cdot}{\cdot}$ is a Hermitian form, the associated antiautomorphism being $^{\star'} \colon F \to F \komma \alpha \mapsto \lambda^{-1} \alpha^\star \lambda$. Let $H'$ arise from $H$ by replacing $\herm{\cdot}{\cdot}$ with $\hermalt{\cdot}{\cdot}$. Then the identity map from $H$ to $H'$ is quasiunitary.

We may use the flexibility in representing linear orthosets by Hermitian spaces to ensure additional properties; cf.~\cite[p.~207]{Hol2}. Following Gross \cite{Gro}, we call the scalar $\herm u u$ the {\it length} of a vector $u \in H$. By anisotropy, the only vector of length $0$ is the zero vector. A {\it unit vector} is a vector of length $1$.

\begin{lemma} \label{lem:scaling-the-Hermitian-form}
Let $H$ be a Hermitian space. Then we may replace the Hermitian form of $H$ with its multiple such that the resulting equivalent space $H'$ contains a unit vector.
\end{lemma}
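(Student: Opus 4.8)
The plan is to scale the form by the inverse of the length of an arbitrary fixed nonzero vector. First I would pick any $u \in H\withoutzero$ and set $\mu = \herm{u}{u}$. Anisotropy gives $\mu \neq 0$, and the Hermitian symmetry $\herm{u}{u} = \herm{u}{u}^\star$ gives $\mu^\star = \mu$. These are precisely the two properties required of a scaling factor in the construction recalled in the paragraph preceding the statement.

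Next I would take $\lambda = \mu^{-1}$ and check that it is an admissible scaling factor, that is, $\lambda \neq 0$ and $\lambda^\star = \lambda$. The former is clear. For the latter---the only step that needs a moment's care---I would use that $\star$ is an antiautomorphism: applying $\star$ to the identity $\mu\,\mu^{-1} = 1$ yields $(\mu^{-1})^\star \mu^\star = 1$, so $(\mu^{-1})^\star = (\mu^\star)^{-1} = \mu^{-1}$, the last equality by $\mu^\star = \mu$. Hence $\lambda^\star = \lambda$.

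With this $\lambda$, I would form $\hermalt{\cdot}{\cdot} = \herm{\cdot}{\cdot}\,\lambda$ and let $H'$ be the space obtained from $H$ by replacing its form with $\hermalt{\cdot}{\cdot}$. By the construction recalled earlier, $\hermalt{\cdot}{\cdot}$ is again a Hermitian form and the identity map $H \to H'$ is quasiunitary, so $H'$ is equivalent to $H$. Finally $\hermalt{u}{u} = \herm{u}{u}\,\lambda = \mu\,\mu^{-1} = 1$, so $u$ is a unit vector of $H'$. There is no genuine obstacle in this argument; its entire content is the antiautomorphism computation confirming that the inverse of a $\star$-fixed nonzero scalar remains $\star$-fixed, so that the single natural candidate $\lambda = \mu^{-1}$ is legitimate.
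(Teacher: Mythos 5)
Your proposal is correct and follows exactly the paper's own argument: scale the form by $\herm{z}{z}^{-1}$ for an arbitrary $z \in H\withoutzero$ and invoke the scaling construction from the preceding paragraph. You merely spell out the verification that $\herm{z}{z}^{-1}$ is a nonzero $\star$-fixed scalar, which the paper leaves implicit.
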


\begin{proof}
Let $z \in H\withoutzero$ and put $\hermalt{\cdot}{\cdot} = \herm{\cdot}{\cdot} \herm{z}{z}^{-1}$. As we have argued above, the space $H'$, arising from $H$ by replacing the Hermitian form $\herm{\cdot}{\cdot}$ with $\hermalt{\cdot}{\cdot}$, is a Hermitian space equivalent to $H$. Obviously, $z$ is in $H'$ a unit vector.
\end{proof}

We briefly discuss structure-preserving self-maps. For the subsequent version of Wigner's Theorem, see also \cite{PaVe1}.

We refer to a unitary map from a Hermitian space to itself as a unitary operator and we denote the group of unitary operators on a Hermitian space $H$ by $\U(H)$. If $H$ happens to be a quadratic space, the term ``unitary'' is replaced with ``orthogonal'' and the symbol ``$\U$'' with ``$\O$''.

An automorphism of an orthoset $(X,\perp)$ is an isomorphism of $(X,\perp)$ with itself. We denote the group of automorphisms by $\Aut(X)$. The unitary operators on a Hermitian space $H$ and automorphisms of $(P(H),\perp)$ are related as follows; cf.~\cite[Section~4]{PaVe2}.

\begin{theorem} \label{thm:Wigner}
Let $H$ be a Hermitian space. For any unitary operator $U$, the map
\begin{equation} \label{fml:map-induced-by-unitary-operator}
P(U) \colon P(H) \to P(H) \komma \lin x \mapsto \lin{U(x)}.
\end{equation}
is an automorphism of $(P(H), \perp)$. The map $P \colon \U(H) \to \Aut(P(H))$ is a group homomorphism.

Conversely, let $\phi$ be an automorphism of $(P(H), \perp)$ and assume that there is an at least two-dimensional subspace $M$ of $H$ such that $\phi(\lin x) = \lin x$ for any $x \in M\withoutzero$. Then there is a unique unitary operator $U$ on $H$ such that $\phi = P(U)$ and $U|_M = \id_M$.
\end{theorem}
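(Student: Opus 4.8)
\emph{Forward direction.} A unitary operator $U$ on $H$ is exactly a quasiunitary map from $H$ to itself (the special case $\sigma = \id$, $\lambda = 1$ of (\ref{fml:quasiunitary})), so Lemma~\ref{lem:semiisometry-induces-embedding} immediately gives that $P(U)$ is an isomorphism of orthosets from $P(H)$ to itself, that is, an element of $\Aut(P(H))$. For the homomorphism property I would simply compute, for $U_1, U_2 \in \U(H)$ and $x \in H\withoutzero$,
\[ P(U_1 U_2)(\lin x) = \lin{U_1(U_2(x))} = P(U_1)\big(P(U_2)(\lin x)\big), \]
together with $P(\id_H) = \id_{P(H)}$; this part is routine.

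\emph{Converse, reduction to a quasiunitary map.} If $\dim H = 2$ then necessarily $M = H$, so $\phi$ fixes every element of $P(H)$ and $U = \id_H$ works; assume therefore $\dim H \geq 3$. As $\phi$ is in particular an isomorphism of orthosets, Theorem~\ref{thm:embedding-of-projective-Hermitian-spaces}(ii) yields a quasiunitary $T \colon H \to H$, with associated field automorphism $\sigma$ and scalar $\lambda$, such that $\phi = P(T)$. Since $\phi$ fixes $\lin x$ for every $x \in M\withoutzero$, we have $T(x) = \mu_x x$ with $\mu_x \in F \setminus \{0\}$. Additivity of $T$ forces $\mu_{x+y}(x+y) = \mu_x x + \mu_y y$ for linearly independent $x, y \in M$, whence $\mu_x = \mu_y$; since $\dim M \geq 2$ this produces a single scalar $\mu$ with $T(x) = \mu x$ for all $x \in M\withoutzero$. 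Comparing $T(\alpha x) = \alpha^\sigma \mu x$ with $T(\alpha x) = \mu\alpha x$ then shows $\alpha^\sigma = \mu\alpha\mu^{-1}$ for all $\alpha \in F$, i.e.\ $\sigma$ is conjugation by $\mu$.

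\emph{The key step.} I now set $U = \mu^{-1} T$. Because $\sigma$ is conjugation by $\mu$, a direct check gives $U(\alpha u) = \mu^{-1}\alpha^\sigma T(u) = \alpha\, U(u)$, so $U$ is \emph{linear}, and $U(x) = \mu^{-1}\mu x = x$ for $x \in M$, so $U|_M = \id_M$. The crux is to verify that $U$ preserves the Hermitian form. From (\ref{fml:quasiunitary}) and $\herm{u}{v}^\sigma = \mu\herm{u}{v}\mu^{-1}$ one computes
\[ \herm{U(u)}{U(v)} = \mu^{-1}\herm{u}{v}^\sigma\lambda(\mu^\star)^{-1} = \herm{u}{v}\,\mu^{-1}\lambda(\mu^\star)^{-1} \]
for all $u, v \in H$. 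Evaluating at any $x \in M\withoutzero$, where $U(x) = x$ and $\herm{x}{x} \neq 0$ by anisotropy, left-cancellation forces $\mu^{-1}\lambda(\mu^\star)^{-1} = 1$; hence $\herm{U(u)}{U(v)} = \herm{u}{v}$ throughout, $U$ is unitary, and $\phi = P(T) = P(U)$. Keeping track of the non-commutativity of $F$ here—so that the \emph{same} scalar $\mu$ simultaneously linearises $T$ and normalises the form—is the main obstacle.

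\emph{Uniqueness.} If $U, U'$ are both unitary with $P(U) = P(U') = \phi$ and $U|_M = U'|_M = \id_M$, put $W = U'^{-1} U \in \U(H)$. By the homomorphism property $P(W) = \phi^{-1}\phi = \id$, so $W(x) = \nu x$ for a single scalar $\nu$ (the constancy argument above, now applied to the linear map $W$ on the at least two-dimensional $H$), which is central and satisfies $\nu\nu^\star = 1$. Since $W|_M = \id_M$ we get $\nu = 1$, hence $W = \id_H$ and $U = U'$.
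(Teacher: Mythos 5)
Your proof is correct and follows essentially the same route as the paper: obtain a quasiunitary $T$ with $\phi=P(T)$ from Theorem~\ref{thm:embedding-of-projective-Hermitian-spaces}(ii), observe that $T$ acts on $M$ as multiplication by a single scalar $\mu$, and rescale to $U=\mu^{-1}T$. The only difference is that you spell out the verification that $U$ is linear (via $\alpha^\sigma=\mu\alpha\mu^{-1}$) and form-preserving (via $\lambda=\mu\mu^\star$), which the paper compresses into ``hence $U$ is actually unitary.''
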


\begin{proof}
The first part is readily checked. To see the second part, let $\phi \colon P(H) \to P(H)$ be as indicated. If $H$ is at most two-dimensional, the assertion is trivial. Otherwise, there is by Theorem~\ref{thm:embedding-of-projective-Hermitian-spaces} a quasiunitary map $\tilde U \colon H \to H$ such that $\phi = P(U)$. Since $\tilde U$ induces on $P(M)$ the identity, there is a $\kappa \neq 0$ such that $\tilde U|_M = \kappa \, \id_M$. Let $U = \kappa^{-1} \tilde U$. Then $U$ is again a quasiunitary map such that $\phi = P(U)$. Moreover, $U|_M = \id_M$ and hence $U$ is actually unitary. The uniqueness of $U$ is clear from the fact that any other map inducing $\phi$ is a non-zero multiple of $U$.
\end{proof}

The second part of Theorem~\ref{thm:Wigner} implies that the groups consisting of the unitary operators that keep an at least two-dimensional subspace pointwise fixed can conveniently be correlated to the automorphisms of the associated orthoset.

For an orthoclosed subspace $M$ of a Hermitian space $H$, let
\[ \U(H,M) \;=\; \{ U \in \U(H) \colon U|_{M\c} = \id_{M\c} \}. \]
For an orthoclosed subset $A$ of an orthoset $X$, let
\[ \Aut(X,A) \;=\; \{ \phi \in \Aut(X) \colon \phi|_{A\c} = \id_{A\c} \}. \]

\begin{theorem} \label{thm:Wigner-restricted}
Let $H$ be a Hermitian space and let $M$ be a subspace of $H$ such that $M\c$ is at least two-dimensional. Then the map
\[ \U(H,M) \to \Aut(P(H),P(M)) \komma \; U \mapsto P(U) \]
is a group isomorphism.
\end{theorem}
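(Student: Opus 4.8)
The plan is to recognise this statement as essentially a repackaging of the second part of Theorem~\ref{thm:Wigner}, applied not to $M$ but to the subspace $M\c$. The key preliminary observation is that, inside the orthoset $P(H)$, the orthocomplement of $P(M)$ is $P(M\c)$. Indeed, $\lin e \in P(M)\c$ holds iff $e \perp f$ for every $f \in M\withoutzero$, that is, iff $e \in M\c$; hence $P(M)\c = P(M\c)$. Consequently the fixed-set conditions defining the two sides both refer to $M\c$: a unitary operator $U$ lies in $\U(H,M)$ iff $U|_{M\c} = \id_{M\c}$, and an automorphism $\phi$ lies in $\Aut(P(H),P(M))$ iff $\phi(\lin x) = \lin x$ for every $x \in (M\c)\withoutzero$.

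First I would check that the assignment $U \mapsto P(U)$ is well defined and is a group homomorphism. If $U \in \U(H,M)$, then $U(e) = e$ for each $e \in M\c$, so $P(U)(\lin e) = \lin{U(e)} = \lin e$; thus $P(U)$ fixes $P(M\c) = P(M)\c$ pointwise and indeed lies in $\Aut(P(H),P(M))$. That $P$ is multiplicative is the first part of Theorem~\ref{thm:Wigner}, and the restriction of a homomorphism to a subgroup is again a homomorphism.

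The heart of the matter is bijectivity, and here the hypothesis that $M\c$ be at least two-dimensional is exactly what licenses the use of Theorem~\ref{thm:Wigner}. For surjectivity, take $\phi \in \Aut(P(H),P(M))$; by the observation above, $\phi$ fixes $\lin x$ for every $x \in (M\c)\withoutzero$, and since $M\c$ is at least two-dimensional, the second part of Theorem~\ref{thm:Wigner} (with $M\c$ in the role of the fixed subspace) furnishes a unitary operator $U$ with $\phi = P(U)$ and $U|_{M\c} = \id_{M\c}$, i.e.\ $U \in \U(H,M)$. For injectivity I would invoke the uniqueness clause of the same theorem: the $U$ just produced is the \emph{unique} unitary map inducing $\phi$ and restricting to the identity on $M\c$, so in particular $P(U) = \id$ with $U \in \U(H,M)$ forces $U = \id$. (Alternatively, injectivity is visible directly: $P(U) = \id$ means $U$ fixes every line and is therefore scalar, and comparing $U(a+x)$ with $a + U(x)$ for a fixed $a \in (M\c)\withoutzero$ and $x$ independent from $a$ shows the scalar is $1$.)

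I do not anticipate a genuine obstacle, since the theorem unwinds cleanly to Theorem~\ref{thm:Wigner}; the only points demanding care are the identification $P(M)\c = P(M\c)$ and the discipline of feeding $M\c$, rather than $M$, into Theorem~\ref{thm:Wigner}, so that the two-dimensionality hypothesis is applied precisely where it is needed.
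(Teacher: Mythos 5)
Your proposal is correct and follows essentially the same route as the paper: well-definedness via the identification $P(M)\c = P(M\c)$, the homomorphism property from the first part of Theorem~\ref{thm:Wigner}, and both surjectivity and injectivity from its second part (including the uniqueness clause) applied to the at least two-dimensional subspace $M\c$. The paper's own proof is just a terser version of the same argument.
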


\begin{proof}
Denote the indicated map by $P_M$ and note first that $P_M$ is well-defined: for any $U \in \U(H,M)$, we have that $U|_{M\c}$ is the identity, hence $P(U)|_{P(M)\c}$ is the identity as well.

By the first part of Theorem~\ref{thm:Wigner}, $P_M$ is a group homomorphism. By its second part, $P_M$ is both surjective and injective.
\end{proof}

\section{Transitive Hermitian spaces and their associated orthosets}
\label{sec:Transitivity}

We call a Hermitian space $H$ {\it transitive} if its unitary group induces a transitive action on $P(H)$. That is, for $H$ to be transitive means that, for any $u, v \in H\withoutzero$, there is a unitary operator $U$ such that $U(u) \in \lin v$. In this section, we characterise the orthosets that arise from transitive Hermitian spaces.

In the remainder of this paper, we shall tacitly make use of Lemma~\ref{lem:scaling-the-Hermitian-form}: we will assume from now on that all Hermitian spaces (including quadratic spaces) contain a unit vector.

We say that $H$ {\it admits unit vectors} if actually every one-dimensional subspace of $H$ contains a unit vector. Note that this is the case if and only if the length of every vector equals $\alpha \alpha^\star$ for some~$\alpha$.

\begin{lemma} \label{lem:transitive-1}
Let $H$ be a Hermitian space. Then the following are equivalent:
\begin{itemize}

\item[\rm (a)] $H$ is transitive.

\item[\rm (b)] Let $M_1$ and $M_2$ be subspaces of $H$ of the same finite dimension. Then there is a unitary operator $U$ such that $U(M_1) = M_2$ and $U$ is the identity on $M_1 \cap M_2$ and on $(M_1 + M_2)\c$.

\item[\rm (c)] $H$ admits unit vectors.

\end{itemize}
\end{lemma}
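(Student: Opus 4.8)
The plan is to prove the cycle (a) $\Rightarrow$ (c) $\Rightarrow$ (b) $\Rightarrow$ (a). Two of these are short. For (a) $\Rightarrow$ (c), recall that by convention $H$ contains a unit vector $z$, so $\herm z z = 1$. Given $u \in H\withoutzero$, transitivity yields a unitary operator $U$ with $U(u) \in \lin z$, say $U(u) = \beta z$ with $\beta \neq 0$. Then $\herm u u = \herm{U(u)}{U(u)} = \beta\,\herm z z\,\beta^\star = \beta\beta^\star$, and the vector $\beta^{-1}u$ satisfies $\herm{\beta^{-1}u}{\beta^{-1}u} = \beta^{-1}\beta\beta^\star(\beta^\star)^{-1} = 1$. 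Hence $\lin u$ contains a unit vector, which is (c). For (b) $\Rightarrow$ (a), given $u, v \in H\withoutzero$ I would apply (b) to the one-dimensional subspaces $M_1 = \lin u$ and $M_2 = \lin v$; the resulting unitary $U$ satisfies $U(\lin u) = \lin v$, so $U(u) \in \lin v$ and $H$ is transitive.

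The substance lies in (c) $\Rightarrow$ (b), and the key idea is to avoid explicit reflections and argue entirely through orthonormal bases. First I would record a Gram--Schmidt lemma: if $H$ admits unit vectors, then every finite-dimensional subspace $M$ has an orthonormal basis. This follows by induction on $\dim M$, picking any unit vector $e_1 \in M$ (a line of $M$ is a line of $H$ and hence carries a unit vector), splitting $M = \lin{e_1} \oplus (\lin{e_1}\c \cap M)$ --- legitimate since finite-dimensional subspaces are splitting and anisotropy gives $\lin{e_1} \cap \lin{e_1}\c = \{0\}$ --- and applying the induction hypothesis to $\lin{e_1}\c \cap M$. The same argument shows that an orthonormal basis of a subspace $N \subseteq M$ extends to one of $M$. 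The second ingredient is purely formal: a linear bijection carrying one orthonormal basis onto another has, on both sides, the identity as Gram matrix, hence preserves the Hermitian form and is therefore unitary.

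With these tools (b) becomes bookkeeping. Put $N = M_1 \cap M_2$ and $S = M_1 + M_2$; both are finite-dimensional, and $S\c = (M_1 + M_2)\c$ with $H = S \oplus S\c$. I would choose an orthonormal basis $c_1 \komma \ldots \komma c_d$ of $N$ and extend it, on the one hand, to an orthonormal basis of $M_1$ and thence to an orthonormal basis of $S$, and, on the other hand, to an orthonormal basis of $M_2$ and thence to a second orthonormal basis of $S$. Let $U_0 \colon S \to S$ be the linear bijection matching the first orthonormal basis of $S$ with the second one so that the shared vectors $c_1 \komma \ldots \komma c_d$ are fixed and the $M_1$-part is sent onto the $M_2$-part. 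By the second ingredient $U_0$ is unitary, and by construction $U_0|_N = \id$ and $U_0(M_1) = M_2$. Extending $U_0$ by the identity on $S\c$ yields a unitary operator $U$ on $H$ that is the identity on $N = M_1 \cap M_2$ and on $S\c = (M_1 + M_2)\c$ and satisfies $U(M_1) = M_2$, as required.

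The one genuinely delicate point --- the place I expect to spend the most care --- is arranging the two extensions so that all three requirements hold at once: that $U_0$ fixes $M_1 \cap M_2$ pointwise, that it sends $M_1$ onto $M_2$, and that it is trivial on $(M_1 + M_2)\c$. Making these compatible is exactly what forces the two orthonormal bases of $S$ to share the common initial segment $c_1 \komma \ldots \komma c_d$ coming from $N$; the anisotropy (ensuring $N \cap N\c = \{0\}$) and the splitting of finite-dimensional subspaces are invoked repeatedly here and are what make this Gram--Schmidt bookkeeping legitimate.
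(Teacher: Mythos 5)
Your proof is correct and follows essentially the same route as the paper's: (a)$\Rightarrow$(c) via the standing convention that $H$ contains a unit vector, (b)$\Rightarrow$(a) by specialising to one-dimensional subspaces, and (c)$\Rightarrow$(b) by Gram--Schmidt, extending an orthonormal basis of $M_1 \cap M_2$ through $M_1$ and $M_2$ and matching the resulting bases while fixing $(M_1+M_2)^\perp$. If anything, you are slightly more explicit than the paper in extending both bases to the full sum $M_1+M_2$ before defining the unitary operator.
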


\begin{proof}
As we have assumed $H$ to contain a unit vector, (a) implies (c). Clearly, (b) implies (a).

To show that (c) implies (b), assume that $H$ admits unit vectors. Then any finite-dimensional subspace possesses an orthonormal basis. Extend an orthonormal basis of $M_1 \cap M_2$ to an orthonormal basis $B_1$ of $M_1$ and to an orthonormal basis $B_2$ of $M_2$. As $M_1 + M_2$ is splitting, there is a unitary operator sending $B_1$ to $B_2$ and keeping $B_1 \cap B_2$ as well as $(M_1+M_2)\c$ fixed.
\end{proof}

We will call a $\star$-sfield $F$ {\it Pythagorean} if, for any $\alpha, \beta \in F$, there is a $\gamma \in F$ such that $\alpha \alpha^\star + \beta \beta^\star = \gamma \gamma^\star$. Our terminology is consistent with the common definition of Pythagorean fields, provided that the field is understood to be equipped with the identity involution. Furthermore, we will call a $\star$-sfield $F$ {\it formally real} if, for any $\alpha_1, \ldots, \alpha_k \in F$, $k \geq 1$, we have that $\alpha_1 {\alpha_1}^\star + \ldots + \alpha_k {\alpha_k}^\star = 0$ implies $\alpha_1 = \ldots = \alpha_k = 0$. Again, for a field equipped with the identity involution, our notion coincides with the usual one. Note that a formally real $\star$-sfield has characteristic $0$.

We remark that both notions are common but in the non-commutative case definitions might differ from ours, cf.\ \cite[p.~157]{Bae}, \cite{Hol1}, as well as \cite{CrSm}.

\begin{lemma} \label{lem:transitive-2}
Let $H$ be a transitive Hermitian space of dimension $\geq 2$ over the $\star$-sfield $F$. Then $F$ is Pythagorean and formally real.
\end{lemma}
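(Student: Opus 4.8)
The plan is to reduce the statement to the characterisation of transitivity provided by Lemma~\ref{lem:transitive-1}. Since $H$ is transitive, condition (a) of that lemma holds, hence so does condition (c): $H$ admits unit vectors, which by the remark preceding the lemma means that the length $\herm{u}{u}$ of every vector $u$ equals $\gamma\gamma^\star$ for a suitable $\gamma \in F$. As the dimension is $\geq 2$, I would first fix two orthogonal non-zero vectors and rescale each to a unit vector, obtaining $e_1, e_2 \in H$ with $\herm{e_1}{e_1} = \herm{e_2}{e_2} = 1$ and $\herm{e_1}{e_2} = 0$. A direct expansion using sesquilinearity and these relations then gives, for any $\alpha, \beta \in F$,
\[ \herm{\alpha e_1 + \beta e_2}{\alpha e_1 + \beta e_2} \;=\; \alpha\alpha^\star + \beta\beta^\star. \]

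For the Pythagorean property this is essentially all that is needed. Given $\alpha, \beta \in F$, the vector $\alpha e_1 + \beta e_2$ has length $\alpha\alpha^\star + \beta\beta^\star$; since $H$ admits unit vectors, this length is of the form $\gamma\gamma^\star$ for some $\gamma \in F$, which is exactly the assertion that $F$ is Pythagorean.

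For formal reality I would argue by induction on $k$. The case $k = 1$ is immediate: $\alpha\alpha^\star = 0$ forces $\alpha = 0$, because $^\star$ is a bijection and the sfield $F$ has no zero divisors. The case $k = 2$ is where anisotropy enters: if $\alpha\alpha^\star + \beta\beta^\star = 0$, then by the displayed identity $\alpha e_1 + \beta e_2$ is a vector of length $0$, hence $\alpha e_1 + \beta e_2 = 0$ by anisotropy, and the linear independence of $e_1, e_2$ yields $\alpha = \beta = 0$. For the inductive step ($k \geq 3$), suppose $\alpha_1\alpha_1^\star + \cdots + \alpha_k\alpha_k^\star = 0$. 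Iterating the Pythagorean property, already established, I would collapse the tail $\alpha_2\alpha_2^\star + \cdots + \alpha_k\alpha_k^\star$ into a single term $\gamma\gamma^\star$, so that $\alpha_1\alpha_1^\star + \gamma\gamma^\star = 0$; the $k = 2$ case then gives $\alpha_1 = 0$ and $\gamma = 0$, whence the tail vanishes and the induction hypothesis finishes the argument.

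The main obstacle is precisely this passage to arbitrary $k$ in the formal-reality part. The naive approach of realising $\alpha_1\alpha_1^\star + \cdots + \alpha_k\alpha_k^\star$ as the length of $\alpha_1 e_1 + \cdots + \alpha_k e_k$ would require $k$ mutually orthogonal unit vectors, whereas the hypothesis only guarantees dimension $\geq 2$. The resolution is to exploit the Pythagorean property, which one obtains for free from just two orthonormal vectors, to compress any sum of norms into a single norm, thereby reducing every instance back to the two-dimensional situation that the space genuinely supports.
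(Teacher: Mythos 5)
Your proposal is correct and follows essentially the same route as the paper: both invoke Lemma~\ref{lem:transitive-1} to obtain orthogonal unit vectors, read off the Pythagorean property from the length of $\alpha e_1 + \beta e_2$, derive the two-term case of formal reality from anisotropy, and extend to arbitrary $k$ by induction using the Pythagorean property to collapse the tail. The only difference is cosmetic: the paper phrases the Pythagorean step via a scalar $\gamma$ making $\gamma(\alpha u+\beta v)$ a unit vector, while you use the equivalent reformulation of ``admits unit vectors'' stated just before Lemma~\ref{lem:transitive-1}; and you spell out the inductive argument that the paper leaves to the reader.
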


\begin{proof}
Let $\alpha, \beta \in F \setminus \{0\}$. By Lemma~\ref{lem:transitive-1}, there are orthogonal unit vectors $u, v \in H$ and for some $\gamma \in F \setminus \{0\}$, $\gamma(\alpha u + \beta v)$ is a unit vector as well. But then
\[ 1 \;=\; \herm{\gamma(\alpha u + \beta v)}{\gamma(\alpha u + \beta v)}
\;=\; \gamma \alpha \alpha^\star \gamma^\star + \gamma \beta \beta^\star \gamma^\star \]
and hence $\alpha \alpha^\star + \beta \beta^\star = \gamma^{-1} (\gamma^{-1})^\star$. We conclude that $F$ is Pythagorean.

To show that $F$ is formally real, we observe that, for any $\alpha, \beta \in F$, $\alpha \alpha^\star + \beta \beta^\star = 0$ implies $\alpha = \beta = 0$. Indeed, let again $u$ and $v$ be orthogonal unit vectors. Then $0 = \alpha \alpha^\star + \beta \beta^\star = \herm{\alpha u + \beta v}{\alpha u + \beta v}$, hence $\alpha u + \beta v = 0$ and $\alpha = \beta = 0$ as asserted. By means of the Pythagorean property, the assertion now follows by an inductive argument.
\end{proof}

Our next lemma shows that the transitive Hermitian spaces of finite dimension $\geq 2$ are, up to isomorphism, exactly the spaces $F^n$, equipped with the standard inner product, where $F$ is a Pythagorean formally real $\star$-sfield.

\begin{lemma} \label{lem:standard-inner-product}
Let $H$ be a transitive Hermitian space of finite dimension $n \geq 2$ over the $\star$-sfield $F$. Then $F$ is Pythagorean and formally real, and $H$ is isomorphic to $F^n$, equipped with the standard inner product given by
\begin{equation} \label{fml:standard-inner-product}
\Big(\begin{smm} \alpha_1 \\ \vdots \\ \alpha_n \end{smm}, \begin{smm} \beta_1 \\ \vdots \\ \beta_n \end{smm}\Big) \;=\; \alpha_1 \beta_1^\star + \ldots + \alpha_n \beta_n^\star.
\end{equation}
Conversely, for any Pythagorean formally real $\star$-sfield $F$ and any $n \geq 1$, $\,F^n$, equipped with the inner product {\rm (\ref{fml:standard-inner-product})}, is a transitive Hermitian space.
\end{lemma}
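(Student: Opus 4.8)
The statement splits into a forward implication and its converse, and the claim that $F$ is Pythagorean and formally real is precisely Lemma~\ref{lem:transitive-2}, so I would simply invoke that. The remaining content is the unitary isomorphism $H \cong F^n$ in the forward direction and the transitivity of $F^n$ in the converse; both reduce, via Lemma~\ref{lem:transitive-1}, to the existence of orthonormal, respectively unit, vectors.

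For the forward direction, since $H$ is transitive it admits unit vectors by Lemma~\ref{lem:transitive-1}, and then --- exactly as in the argument that (c) implies (b) there --- the finite-dimensional space $H$ possesses an orthonormal basis $e_1, \ldots, e_n$. I would then consider the coordinate map $\Phi \colon H \to F^n$, $\sum_i \alpha_i e_i \mapsto (\alpha_1, \ldots, \alpha_n)^{\mathsf T}$. This is evidently a linear bijection, and since $\herm{e_i}{e_j} = \delta_{ij}$, expanding $\herm{\sum_i \alpha_i e_i}{\sum_j \beta_j e_j}$ by sesquilinearity collapses to $\sum_i \alpha_i \beta_i^\star$, which is exactly the standard inner product~(\ref{fml:standard-inner-product}) of $\Phi(u)$ and $\Phi(v)$. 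Hence $\Phi$ preserves the Hermitian form, is unitary, and witnesses $H \cong F^n$.

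For the converse, I would first confirm that $F^n$ equipped with~(\ref{fml:standard-inner-product}) is a Hermitian space at all: the sesquilinearity and symmetry axioms are routine, and anisotropy is precisely formal reality, since $\sum_i \alpha_i \alpha_i^\star = 0$ forces every $\alpha_i = 0$. The vector $e_1 = (1, 0, \ldots, 0)^{\mathsf T}$ has length $1$ (using that an antiautomorphism fixes $1$), so $F^n$ contains a unit vector and the standing assumption is met. To obtain transitivity I would again invoke Lemma~\ref{lem:transitive-1} and show that $F^n$ admits unit vectors; by the remark following the definition of that notion, it suffices to verify that the length $\sum_i \alpha_i \alpha_i^\star$ of an arbitrary vector equals $\gamma \gamma^\star$ for some $\gamma \in F$. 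This is exactly what an inductive application of the Pythagorean property yields: one absorbs $\alpha_1 \alpha_1^\star + \alpha_2 \alpha_2^\star$ into a single $\gamma_2 \gamma_2^\star$, then $\gamma_2 \gamma_2^\star + \alpha_3 \alpha_3^\star$ into $\gamma_3 \gamma_3^\star$, and so on up to $n$.

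None of these steps presents a serious obstacle; the computations are routine once the earlier lemmas are lined up. The only point requiring a little care is ensuring the standing hypothesis that $F^n$ contains a unit vector \emph{before} invoking the equivalence in Lemma~\ref{lem:transitive-1} --- this is why I single out $e_1$ explicitly --- and, relatedly, recognising that ``admitting unit vectors'' is the genuinely useful reformulation of transitivity in this finite-dimensional setting, since it converts the geometric transitivity statement into the purely arithmetic Pythagorean identity on $F$.
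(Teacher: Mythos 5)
Your proposal is correct and follows the paper's own argument exactly: Lemma~\ref{lem:transitive-2} for the Pythagorean and formally real properties, Lemma~\ref{lem:transitive-1} to pass between transitivity and the existence of unit (and hence orthonormal) vectors, and the observation that formal reality gives anisotropy while the Pythagorean property gives unit vectors in every line of $F^n$. You merely spell out the coordinate isomorphism and the inductive absorption of lengths, which the paper leaves implicit.
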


\begin{proof}
Let $H$ be as indicated. Then $F$ is Pythagorean and formally real by Lemma~\ref{lem:transitive-2}. Moreover, by Lemma~\ref{lem:transitive-1}, $H$ admits unit vectors and possesses hence an orthonormal basis. The assertions follow.

Furthermore, let $F$ be a Pythagorean formally real $\star$-sfield and $n \in \Naturals$. The property of being formally real implies that (\ref{fml:standard-inner-product}) defines a Hermitian form on $F^n$. The property of being Pythagorean implies that $H$ admits unit vectors. Clearly, $F^n$ contains a unit vector and hence, by Lemma~\ref{lem:transitive-1}, $F^n$ is transitive.
\end{proof}

Given a Pythagorean formally real $\star$-sfield $F$ and $n \geq 1$, we will assume throughout the paper that $F^n$ is the Hermitian space equipped with the Hermitian form (\ref{fml:standard-inner-product}).

We now turn to the orthosets associated with transitive Hermitian spaces. For distinct elements $e$ and $f$ of an orthoset $X$, let us call the set
\[ e \star f \;=\; \{e,f\}\cc \]
a {\it line} of $X$. Note that in the case that $X = P(H)$ for some Hermitian space $H$, this definition is consistent with (\ref{fml:projective-star}).

Furthermore, for an orthoclosed subset $A$ of an orthoset $(X,\perp)$, we will say that a subgroup $G$ of $\Aut(X)$ {\it acts transitively} on $A$ if $A$ is invariant under the action of $G$ and the action of $G|_A = \{ \phi|_A \colon \phi \in G \}$ is transitive on~$A$. Obviously, $A$ is in this case a $G$-orbit, that is, $A = G(e)$ for any $e \in A$.

\begin{definition}
We call an orthoset $(X,\perp)$
\begin{itemize}

\item[\rm (i)] {\it line-transitive} if for any two lines $\mathbcal l$ and $\mathbcal m$ there is an automorphism $\phi$ of $(X,\perp)$ such that $\phi(\mathbcal l) = \mathbcal m$;

\item[\rm (ii)] {\it line-permutable} if for any line $\mathbcal l$, $\Aut(X,{\mathbcal l})$ acts transitively on $\mathbcal l$;

\item[\rm (iii)] {\it line-symmetric} if $(X,\perp)$ is line-transitive and line-permutable;

\item[\rm (iv)] {\it flag-transitive} if for any element $e$ of a line $\mathbcal l$, and any element $f$ of a line $\mathbcal m$, there is an automorphism $\phi$ of $(X,\perp)$ such that  $\phi(e) = f$ and $\phi(\mathbcal l) = \mathbcal m$.

\end{itemize}
\end{definition}

We insert the note that any automorphism $\tau$ of an orthoset $(X,\perp)$ preserves the $\star$ operation as well as the orthocomplement. This is immediate from the fact that $\tau$ induces an automorphism of the ortholattice of orthoclosed subsets of $X$ \cite[Proposition~2.5]{Vet3}, but we include a short direct proof.

\begin{lemma} \label{lem:automorphism-preserves-structure}
Let $\tau$ be an automorphism of an orthoset $(X,\perp)$.
\begin{itemize}

\item[\rm (i)] $\tau(e \star f) = \tau(e) \star \tau(f)$ for any distinct $e, f \in X$.

\item[\rm (ii)] $\tau(A\c) = \tau(A)\c$ for any $A \subseteq X$.

\item[\rm (iii)] If $A \subseteq X$ is orthoclosed and $\tau|_{A\c} = \id_{A\c}$, then $\tau(A) = A$.

In particular, if $\tau \in \Aut(X,{\mathbcal l})$ for some line $\mathbcal l$, then $\tau(\mathbcal l) = \mathbcal l$.

\end{itemize}
\end{lemma}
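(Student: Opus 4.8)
The key property underlying all three claims is that $\tau$, being an automorphism of $(X,\perp)$, both preserves and reflects orthogonality: for any $e,f \in X$ we have $e \perp f$ if and only if $\tau(e) \perp \tau(f)$. The plan is to establish (ii) first and then obtain (i) and (iii) as short consequences.

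For (ii), I would argue directly from the definition of the orthocomplement. Fix $A \subseteq X$ and an arbitrary $g \in X$; since $\tau$ is a bijection, write $g = \tau(e)$ for the unique $e \in X$ with this property. Then $g \in \tau(A)\c$ means that $\tau(e) \perp h$ for every $h \in \tau(A)$, and since $h$ ranges over $\tau(A)$ exactly as $h = \tau(f)$ with $f$ ranging over $A$, this is equivalent---using that $\tau$ reflects $\perp$---to $e \perp f$ for all $f \in A$, that is, to $e \in A\c$, that is, to $g \in \tau(A\c)$. Hence $\tau(A\c) = \tau(A)\c$.

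Part (i) then follows by applying (ii) twice. Since $e \star f = \{e,f\}\cc$ and $\tau(\{e,f\}) = \{\tau(e),\tau(f)\}$ with $\tau(e) \neq \tau(f)$ by injectivity, we get
\[
\tau(e \star f) \;=\; \tau(\{e,f\})\cc \;=\; \{\tau(e),\tau(f)\}\cc \;=\; \tau(e)\star\tau(f).
\]
For (iii), I would first note that applying (ii) twice gives $\tau(A\cc) = \tau(A)\cc$, so the image of an orthoclosed set under $\tau$ is again orthoclosed; in particular $\tau(A)$ is orthoclosed whenever $A$ is. Now if $A$ is orthoclosed and $\tau|_{A\c} = \id_{A\c}$, then $\tau(A\c) = A\c$, whence by (ii) $\tau(A)\c = \tau(A\c) = A\c$. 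Applying $\c$ once more and using that both $A$ and $\tau(A)$ are orthoclosed yields $\tau(A) = \tau(A)\cc = A\cc = A$. The final clause is the special case $A = \mathbcal l$: a line $\{e,f\}\cc$ has the form $B\c$ and is therefore orthoclosed, while $\tau \in \Aut(X,\mathbcal l)$ means precisely $\tau|_{\mathbcal l\c} = \id_{\mathbcal l\c}$.

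There is no genuine obstacle here. The only points that require a little care are to carry out (ii) first, so that (i) and (iii) become one-line deductions, and to observe that an automorphism sends orthoclosed sets to orthoclosed sets---this is exactly what legitimises the passage from $\tau(A)\c = A\c$ back to $\tau(A) = A$ in part (iii).
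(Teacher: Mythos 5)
Your proof is correct and follows essentially the same route as the paper: both arguments rest on the fact that an automorphism preserves and reflects $\perp$, giving $\tau(A\c)=\tau(A)\c$ by a direct element chase, and then (iii) by applying $\c$ again. The only cosmetic difference is that you deduce (i) from (ii) via $e\star f=\{e,f\}\cc$, whereas the paper verifies (i) by the same kind of direct set computation; the content is identical.
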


\begin{proof}
(i): $\tau(e \star f) = \{ \tau(x) \colon x \perp y \text{ for all } y \perp e, f \} = \{ \tau(x) \colon \tau(x) \perp \tau(y) \text{ for all } y \text{ such} \linebreak \text{that } \tau(y) \perp \tau(e), \tau(f) \} = \{ \tilde x \colon \tilde x \perp \tilde y \text{ for all } \tilde y \perp \tau(e), \tau(f) \} = \tau(e) \star \tau(f)$.

(ii): $\tau(A\c) = \{ \tau(x) \colon x \perp A \} = \{ \tau(x) \colon \tau(x) \perp \tau(A) \} = \{ y \colon y \perp \tau(A) \} = \tau(A)\c$.

(iii): In the indicated case, we have that $\tau(A\c) = A\c$ and hence $\tau(A) = \tau(A\cc) = \tau(A\c)\c = A\cc = A$ by part (ii).
\end{proof}

\begin{lemma} \label{lem:line-symmetric-flag-transitive}
Any line-symmetric orthoset is flag-transitive.
\end{lemma}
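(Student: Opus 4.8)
The plan is to construct the required automorphism as a composition of two automorphisms, the first supplied by line-transitivity and the second by line-permutability. Suppose we are given an element $e$ of a line ${\mathbcal l}$ and an element $f$ of a line ${\mathbcal m}$; we seek $\phi \in \Aut(X)$ with $\phi(e) = f$ and $\phi({\mathbcal l}) = {\mathbcal m}$.

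First I would invoke line-transitivity to obtain an automorphism $\psi$ with $\psi({\mathbcal l}) = {\mathbcal m}$. Since $e \in {\mathbcal l}$, the image $\psi(e)$ then lies in $\psi({\mathbcal l}) = {\mathbcal m}$, so $\psi(e)$ and $f$ are two elements of one and the same line ${\mathbcal m}$. Next I would apply line-permutability to ${\mathbcal m}$: the group $\Aut(X,{\mathbcal m})$ acts transitively on ${\mathbcal m}$, whence there is some $\rho \in \Aut(X,{\mathbcal m})$ with $\rho(\psi(e)) = f$. Setting $\phi = \rho \circ \psi$, we immediately obtain $\phi(e) = f$.

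It remains to verify that $\phi({\mathbcal l}) = {\mathbcal m}$, and this is where the structural lemma does the work. By the definition of $\Aut(X,{\mathbcal m})$, the map $\rho$ fixes ${\mathbcal m}\c$ elementwise, and ${\mathbcal m}$ is orthoclosed; hence Lemma~\ref{lem:automorphism-preserves-structure}(iii) yields $\rho({\mathbcal m}) = {\mathbcal m}$. Therefore $\phi({\mathbcal l}) = \rho(\psi({\mathbcal l})) = \rho({\mathbcal m}) = {\mathbcal m}$, which completes the argument.

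I do not anticipate any genuine obstacle here: the statement is essentially a formal consequence of the two defining conditions of line-symmetry, combined with the observation recorded in Lemma~\ref{lem:automorphism-preserves-structure} that automorphisms respect the $\star$ operation and the orthocomplement. The only point requiring a moment's care is the last one, namely that an automorphism fixing ${\mathbcal m}\c$ pointwise necessarily carries the line ${\mathbcal m}$ onto itself, so that the setwise condition $\phi({\mathbcal l}) = {\mathbcal m}$ is preserved after composing with $\rho$.
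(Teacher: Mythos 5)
Your proof is correct and is precisely the argument the paper leaves implicit: it compresses the whole lemma to ``In view of Lemma~\ref{lem:automorphism-preserves-structure}(iii), this is obvious,'' meaning exactly your composition of a line-transitivity automorphism with a line-permutability automorphism, the latter fixing the target line setwise by that lemma. No gaps; you have simply written out the intended proof in full.
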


\begin{proof}
In view of Lemma~\ref{lem:automorphism-preserves-structure}(iii), this is obvious.
\end{proof}

\begin{proposition} \label{prop:orthoset-from-transitive-Hermitian-space}
Let $H$ be a transitive Hermitian space. Then $(P(H),\perp)$ is a line-symmetric orthoset.
\end{proposition}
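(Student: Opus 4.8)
The plan is to verify directly the two conditions that constitute line-symmetry, namely line-transitivity and line-permutability, for the orthoset $(P(H),\perp)$. In both cases the essential input is the characterisation of transitivity given in Lemma~\ref{lem:transitive-1}(b), combined with the fact, recorded in the first part of Theorem~\ref{thm:Wigner}, that every unitary operator $U$ on $H$ induces an automorphism $P(U)$ of $(P(H),\perp)$. I would begin by recalling that a line of $P(H)$ is exactly a set of the form $P(M)$ for a two-dimensional subspace $M$ of $H$; this is immediate from (\ref{fml:projective-star}) together with the fact that two-dimensional subspaces are splitting, hence orthoclosed. I also note for later use that for such $M$ one has $P(M)\c = P(M\c)$.

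For line-transitivity, let $\mathbcal l = P(M_1)$ and $\mathbcal m = P(M_2)$ be two lines, so that $M_1$ and $M_2$ are two-dimensional subspaces of $H$. Applying Lemma~\ref{lem:transitive-1}(b) to the pair $M_1, M_2$, I obtain a unitary operator $U$ with $U(M_1) = M_2$. Then $P(U)$ is an automorphism of $(P(H),\perp)$, and since $U$ is bijective we have $P(U)(P(M_1)) = P(U(M_1)) = P(M_2)$, whence $P(U)(\mathbcal l) = \mathbcal m$, as required.

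For line-permutability, fix a line $\mathbcal l = P(M)$ with $M$ two-dimensional, and take two elements $\lin u, \lin v$ of $\mathbcal l$, where $u, v \in M\withoutzero$. The case $\lin u = \lin v$ is handled by the identity, so I may assume $\lin u \neq \lin v$; then $u$ and $v$ are linearly independent and $\lin u + \lin v = M$. Applying Lemma~\ref{lem:transitive-1}(b) now to the two \emph{one}-dimensional subspaces $\lin u$ and $\lin v$, I obtain a unitary operator $U$ with $U(\lin u) = \lin v$ that is the identity on $\lin u \cap \lin v = \{0\}$ and on $(\lin u + \lin v)\c = M\c$. Consequently $P(U)$ is an automorphism fixing $\mathbcal l\c = P(M\c)$ pointwise, that is, $P(U) \in \Aut(P(H),\mathbcal l)$, and it satisfies $P(U)(\lin u) = \lin{U(u)} = \lin v$. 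Finally, that $\mathbcal l$ is invariant under the whole group $\Aut(P(H),\mathbcal l)$ follows from Lemma~\ref{lem:automorphism-preserves-structure}(iii), since $\mathbcal l$ is orthoclosed. Hence $\Aut(P(H),\mathbcal l)$ acts transitively on $\mathbcal l$.

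All the steps are routine once the correct subspaces are supplied to Lemma~\ref{lem:transitive-1}(b); the only point demanding a little care — and which I would flag as the main (though minor) obstacle — is the bookkeeping in the line-permutable case, namely the verification that the unitary operator produced for the pair of one-dimensional subspaces $\lin u$, $\lin v$ is the identity on the \emph{entire} orthocomplement $M\c$ of the line, and not merely on $(\lin u + \lin v)\c$. This is exactly where the identity $\lin u + \lin v = M$, valid because $\lin u \neq \lin v$, is used, and where the degenerate case $\lin u = \lin v$ must be disposed of separately. I remark that this direct argument works in every dimension and so avoids any appeal to Theorem~\ref{thm:Wigner-restricted}, whose hypothesis that $M\c$ be at least two-dimensional would otherwise force a case distinction in low dimensions.
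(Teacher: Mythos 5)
Your proposal is correct and follows essentially the same route as the paper's own proof: identify lines with $P(M)$ for two-dimensional $M$, then invoke Lemma~\ref{lem:transitive-1}(b) once for pairs of two-dimensional subspaces (line-transitivity) and once for pairs of one-dimensional subspaces inside $M$ (line-permutability), passing to automorphisms via $P(U)$. The extra bookkeeping you flag — the identity $\lin u + \lin v = M$ and the degenerate case $\lin u = \lin v$ — is implicit in the paper's shorter write-up and is handled correctly in yours.
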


\begin{proof}
For linearly independent vectors $u, v \in H\withoutzero$, we have that $\lin u \star \lin v = P(\lin{u,v})$, hence any line of $P(H)$ is of the form $P(M)$ for some two-dimensional subspace $M$ of $H$.

If $H$ is at most one-dimensional, the assertion hence trivially holds. Let $H$ be at least two-dimensional. For two-dimensional subspaces $M_1$ and $M_2$ of $H$, there is by Lemma~\ref{lem:transitive-1} a unitary operator $U$ such that $U(M_1) = M_2$. This means that the automorphism $P(U)$ of $P(H)$ maps $P(M_1)$ onto $P(M_2)$. Thus $(P(H),\perp)$ is line-transitive. Finally, let $M$ once more be a two-dimensional subspace and $u, v \in M\withoutzero$. Again by Lemma~\ref{lem:transitive-1}, there is a unitary operator $U$ such that $U(\lin u) = \lin v$ and $U|_{M\c} = \id_{M\c}$. Hence  $(P(H),\perp)$ is line-permutable.
\end{proof}

Not all line-symmetric orthosets arise from inner-product spaces. For any set $X$, we call the orthoset $(X,\neq)$ {\it Boolean}.

\begin{example} \label{ex:Boolean-line-symmetric}
Any Boolean orthoset $(X,\neq)$ is line-symmetric. Indeed, $\Aut(X)$ consists of all bijections of $X$, and the lines are the two-element subsets.
\end{example}

Moreover, denote by $\emptyset$ the empty binary relation on a set $X$. The orthosets of the form $(X,\emptyset)$ are obviously exactly those of rank at most $1$.

\begin{example} \label{ex:trivial-line-symmetric}
For any set $X$, $\,(X,\emptyset)$ is line-symmetric. Indeed, $\Aut(X)$ consists again of all bijections of $X$, and if $X$ has at least two elements, the only line of $(X,\emptyset)$ is $X$.
\end{example}

We shall see that, when excluding the trivial examples, line-symmetric orthosets do arise from transitive Hermitian spaces.

\begin{lemma} \label{lem:line-symmetric-linear}
Let $(X,\perp)$ be a flag-transitive orthoset of rank $\geq 2$. Then $(X,\perp)$ is either Boolean or linear.
\end{lemma}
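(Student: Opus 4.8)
The plan is to dispose of the Boolean alternative at once and, assuming $(X,\perp)$ is not Boolean, to verify the two conditions (L1) and (L2) of Lemma~\ref{lem:linearity}. Everything will hinge on one preliminary fact: \emph{every line of $(X,\perp)$ is generated by a pair of orthogonal elements}. Since the rank is $\geq 2$, there are $p \perp q$, so $\mathbcal m = p \star q$ is a line whose two generating points are orthogonal. Flag-transitivity implies line-transitivity (choose one point on each of two lines and apply the defining property), so for an arbitrary line $\mathbcal l$ there is an automorphism $\phi$ with $\phi(\mathbcal m) = \mathbcal l$. Writing $a = \phi(p)$ and $b = \phi(q)$, Lemma~\ref{lem:automorphism-preserves-structure}(i) gives $a \star b = \phi(p \star q) = \phi(\mathbcal m) = \mathbcal l$, while $a \perp b$ and $a \neq b$ because $\phi$ is an automorphism.

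I would obtain (L2) by contradiction. Line-transitivity renders all lines equipotent, so if some line had exactly two elements, every line would. For distinct $x, y$ the line $x \star y$ would then be $\{x,y\}$; by the preliminary fact it contains an orthogonal pair $a, b$, and $a \neq b$ forces $\{a,b\} = \{x,y\}$, hence $x \perp y$. As $x, y$ are arbitrary, this would make $(X,\perp)$ Boolean, contrary to assumption. Thus every line has a third element, which is precisely (L2).

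For (L1) let $e, f$ be distinct and set $\mathbcal l = e \star f$. By the preliminary fact choose $a, b \in \mathbcal l$ with $a \perp b$ and $a \star b = \mathbcal l$. Since $a$ and $e$ both lie on $\mathbcal l$, flag-transitivity yields an automorphism $\psi$ with $\psi(a) = e$ and $\psi(\mathbcal l) = \mathbcal l$; put $g = \psi(b)$. Then $g \in \mathbcal l$ and $e \perp g$ (because $a \perp b$), and by Lemma~\ref{lem:automorphism-preserves-structure}(i),
\[
e \star g \;=\; \psi(a) \star \psi(b) \;=\; \psi(a \star b) \;=\; \psi(\mathbcal l) \;=\; \mathbcal l \;=\; e \star f .
\]
Passing to orthocomplements gives $\{e,g\}\c = \{e,f\}\c$ with $g \perp e$, which is (L1). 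An application of Lemma~\ref{lem:linearity} then yields linearity.

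I expect the sole genuine obstacle to be the equality in (L1): producing \emph{some} $g \perp e$ on the line is easy, but one must secure the full equality $\{e,g\}\c = \{e,f\}\c$, not merely the inclusion $\{e,f\}\c \subseteq \{e,g\}\c$ that $g \in \mathbcal l$ supplies for free. This is exactly what the preliminary fact buys us: because the orthogonal pair $a, b$ \emph{generates} all of $\mathbcal l$, transporting it by the line-preserving automorphism $\psi$ keeps the generated line equal to $\mathbcal l$, pinning down $e \star g = e \star f$.
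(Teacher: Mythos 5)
Your proof is correct and follows essentially the same route as the paper's: both transport a fixed orthogonal pair onto an arbitrary line by an automorphism (using Lemma~\ref{lem:automorphism-preserves-structure} to preserve $\star$) to obtain (L1), and both derive Booleanness from the failure of (L2) via the fact that every line contains an orthogonal pair. The only cosmetic differences are that you apply flag-transitivity in two steps rather than one and phrase the dichotomy contrapositively.
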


\begin{proof}
We first show that $(X,\perp)$ fulfils condition (L1) in Lemma~\ref{lem:linearity}. Let $e, f$ be distinct elements of $X$. As the rank of $(X,\perp)$ is assumed to be at least $2$, there are elements $c, d \in X$ such that $c \perp d$. By flag-transitivity, there is an automorphism $\tau$ such that $\tau(c \star d) = e \star f$ and $\tau(c) = e$. Then, by Lemma~\ref{lem:automorphism-preserves-structure}, $e \star f = \tau(c \star d) = \tau(c) \star \tau(d) = e \star \tau(d)$ and $\tau(d) \perp \tau(c) = e$, showing (L1).

Assume now that $(X,\perp)$ is not linear. By Lemma~\ref{lem:linearity}, condition (L2) does not hold and there are distinct elements $c, d \in X$ such that the line $c \star d$ has only two elements, that is, $c \star d = \{c,d\}$. By (L1), any line contains a pair of orthogonal elements and since orthogonal elements are distinct, we conclude that $c \perp d$. The flag-transitivity moreover implies that any other line of $(X,\perp)$ has also only two elements and hence any two distinct elements are orthogonal. This means that $(X,\perp)$ is Boolean.
\end{proof}

We arrive at the following representation of line-symmetric orthosets.

\begin{theorem} \label{thm:line-symmetric-orthoset}
Let $(X, \perp)$ be a line-symmetric orthoset of rank $\geq 4$ and assume that $(X,\perp)$ is not Boolean. Then there is a transitive Hermitian space $H$ such that $(X, \perp)$ is isomorphic to $(P(H), \perp)$.
\end{theorem}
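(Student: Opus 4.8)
The plan is to reduce $(X,\perp)$ to the orthoset of a Hermitian space by the machinery already developed, and then to upgrade that space to a transitive one. First I would invoke Lemma~\ref{lem:line-symmetric-flag-transitive}, so that $(X,\perp)$, being line-symmetric, is flag-transitive. As its rank is $\geq 4 \geq 2$ and it is assumed non-Boolean, Lemma~\ref{lem:line-symmetric-linear} then yields that $(X,\perp)$ is linear, and since the rank is $\geq 4$, Theorem~\ref{thm:linear-Hermitian} supplies a Hermitian space $H$ together with an isomorphism $(X,\perp) \cong (P(H),\perp)$. Because isomorphisms of orthosets preserve lines, orthocomplements, and the whole automorphism-group structure (Lemma~\ref{lem:automorphism-preserves-structure}), the property of line-permutability transfers from $(X,\perp)$ to $(P(H),\perp)$; moreover, the dimension of $H$ equals the rank of $(X,\perp)$, hence is $\geq 4$.

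It remains to show that $H$ is transitive, and this is the crux. The delicate point is that automorphisms of $(P(H),\perp)$ correspond in general only to \emph{quasiunitary} maps, not to unitary ones, so line-permutability does not immediately produce unitary operators. The remedy is to work line by line and to use that fixing the orthocomplement of a line pointwise forces the inducing map to be genuinely unitary. Concretely, let $\lin u, \lin v \in P(H)$ be distinct, put $M = \lin{u,v}$, and consider the line $\ell = P(M) = \lin u \star \lin v$. Since $\dim M = 2$ and $\dim H \geq 4$, the orthocomplement $M\c$ is at least two-dimensional, so Theorem~\ref{thm:Wigner-restricted} applies and gives a group isomorphism $\U(H,M) \to \Aut(P(H),P(M))$, $U \mapsto P(U)$. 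Line-permutability means precisely that $\Aut(P(H),P(M))$ acts transitively on $\ell$; transporting this through the isomorphism, $\U(H,M)$ acts transitively on $P(M)$, so there is a unitary operator $U \in \U(H,M) \subseteq \U(H)$ with $U(\lin u) = \lin v$. Since $\lin u$ and $\lin v$ were arbitrary (the case $\lin u = \lin v$ being trivial), $\U(H)$ acts transitively on $P(H)$, i.e.\ $H$ is transitive, which completes the argument.

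I expect the main obstacle to be exactly this transitivity step. The tempting but invalid move is to read off unitary transitivity directly from line-permutability, forgetting that the automorphisms realising the permutable action need only be quasiunitary. The decisive observation is that the subgroup of automorphisms fixing the orthocomplement of a line pointwise is, by Theorem~\ref{thm:Wigner-restricted}, isomorphic to the corresponding group of genuine unitary operators, and that this identification is available precisely because rank $\geq 4$ guarantees that every line has an at least two-dimensional orthocomplement. (Note also that line-transitivity is not wasted: it is consumed in passing from line-symmetry to flag-transitivity via Lemma~\ref{lem:line-symmetric-flag-transitive}, which in turn is what Lemma~\ref{lem:line-symmetric-linear} needs to deliver linearity.)
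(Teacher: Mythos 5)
Your proposal is correct and follows exactly the paper's route: Lemmas~\ref{lem:line-symmetric-flag-transitive} and~\ref{lem:line-symmetric-linear} give linearity, Theorem~\ref{thm:linear-Hermitian} produces the Hermitian space, and Theorem~\ref{thm:Wigner-restricted} converts line-permutability into transitivity of the unitary group. The paper states the last step in one sentence; your elaboration of why the rank~$\geq 4$ hypothesis makes Theorem~\ref{thm:Wigner-restricted} applicable (every line has an at least two-dimensional orthocomplement, so the fixing automorphisms are induced by genuinely unitary operators) is precisely the intended argument.
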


\begin{proof}
By Lemmas~\ref{lem:line-symmetric-flag-transitive} and~\ref{lem:line-symmetric-linear}, $(X,\perp)$ is linear. By Theorem~\ref{thm:linear-Hermitian}, $(X,\perp)$ is isomorphic to $(P(H),\perp)$, where $H$ is a Hermitian space. As $(P(H),\perp)$ is line-permutable, it follows from Theorem~\ref{thm:Wigner-restricted} that $H$ is transitive.
\end{proof}

We note that, as in case of linear orthosets, we may formulate Theorem~\ref{thm:line-symmetric-orthoset} lattice-theoretically. We call an element $a$ of a lattice a {\it line} if $a$ is the join of two distinct atoms.

\begin{corollary}
Let $L$ be a complete, irreducible, atomistic ortholattice of length $\geq 4$. Assume moreover, that {\rm (i)} for any lines $a, b \in L$, there is an automorphism $\phi$ of $L$ such that $\phi(a) = b$, {\rm (ii)} for any atoms $p, q \in L$ there is an automorphism $\phi$ of $L$ such that $\phi(p) = q$ and $\phi(a) = a$ for any $a \perp p, q$. Then there is a transitive Hermitian space $H$ such that $L$ is isomorphic to ${\mathcal C}(H)$.
\end{corollary}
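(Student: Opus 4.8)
The plan is to reduce the statement to Theorem~\ref{thm:line-symmetric-orthoset} by passing from $L$ to the orthoset of its atoms. Let $X$ denote the set of atoms of $L$ and, for $p, q \in X$, declare $p \perp q$ if $p \leq q\c$. Since $\c$ is an orthocomplementation this relation is symmetric, and it is irreflexive because $p \wedge p\c = 0$ forces $p \notperp p$ for an atom $p$; thus $(X,\perp)$ is an orthoset, and its rank equals the length of $L$, hence is $\geq 4$. The key bookkeeping step is the identity
\[ \{p,q\}\cc \;=\; \{ r \in X \colon r \leq p \vee q \} \]
for distinct atoms $p, q$. Indeed, $\{p,q\}\c = \{ r \in X \colon r \leq p\c \wedge q\c \} = \{ r \in X \colon r \leq (p \vee q)\c \}$, and since $L$ is atomistic, applying $\c$ once more together with $\bigvee \{ r \in X \colon r \leq (p \vee q)\c \} = (p \vee q)\c$ yields the claim. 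Consequently the lines of $(X,\perp)$ are exactly the sets of atoms below a height-$2$ element of $L$, so they correspond bijectively to the lattice lines of $L$.

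First I would verify that $(X,\perp)$ is line-symmetric. Every automorphism of $L$ maps atoms to atoms and preserves $\perp$, hence restricts to an automorphism of $(X,\perp)$; by the correspondence above it carries the orthoset line below a lattice line $a$ to the one below $\phi(a)$. Thus hypothesis (i) yields line-transitivity. For line-permutability, fix a line $\ell$ of $(X,\perp)$ and two distinct elements $p, q \in \ell$ (which are atoms of $L$); since $\bigvee \ell$ has height $2$ we have $p \vee q = \bigvee \ell$, so an element $a$ of $L$ satisfies $a \perp p, q$ exactly when all atoms below $a$ lie in $\ell\c$. Hence the automorphism supplied by hypothesis (ii), which fixes every $a \perp p, q$, fixes $\ell\c$ pointwise and so restricts to an element of $\Aut(X, \ell)$ sending $p$ to $q$; as $p,q$ were arbitrary, this is precisely line-permutability. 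Therefore $(X,\perp)$ is line-symmetric.

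By Lemma~\ref{lem:line-symmetric-flag-transitive}, $(X,\perp)$ is then flag-transitive, and the argument in the proof of Lemma~\ref{lem:line-symmetric-linear} shows that condition (L1) of Lemma~\ref{lem:linearity} holds. Translating (L1) back through the line correspondence, for any distinct atoms $p, q$ of $L$ there is an atom $r \perp p$ with $p \vee q = p \vee r$; that is, $L$ satisfies the hypothesis of the corollary following Theorem~\ref{thm:linear-Hermitian}, namely \cite[Theorem~3.10]{PaVe1}. Since $L$ is moreover complete, irreducible, atomistic and of length $\geq 4$, that result provides a Hermitian space $H$ with $L \cong {\mathcal C}(H)$. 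Under this isomorphism the atom orthoset $(X,\perp)$ is carried to $(P(H),\perp)$, which is therefore line-symmetric and in particular line-permutable. Exactly as in the final step of the proof of Theorem~\ref{thm:line-symmetric-orthoset}, Theorem~\ref{thm:Wigner-restricted} now forces $H$ to be transitive, which is the assertion.

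I expect the main obstacle to be the careful verification of the line/automorphism dictionary between $L$ and $(X,\perp)$ --- in particular the identity $\{p,q\}\cc = \{ r \in X \colon r \leq p \vee q \}$, which rests on the interaction of the orthocomplementation with arbitrary joins in an atomistic lattice, and the matching of ``$a \perp p, q$'' with membership in $\ell\c$. A secondary point worth recording is that the possibility of $(X,\perp)$ being Boolean is excluded automatically: a Boolean atom orthoset would make ${\mathcal C}(H)$, and hence $L$, a non-trivial direct product, contradicting irreducibility. This is the role played here by the irreducibility hypothesis, replacing the explicit non-Boolean assumption of Theorem~\ref{thm:line-symmetric-orthoset}.
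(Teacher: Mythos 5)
Your proposal is correct and is essentially the argument the paper intends (the corollary is stated without proof as the lattice-theoretic reformulation of Theorem~\ref{thm:line-symmetric-orthoset}): pass to the orthoset of atoms, check that hypotheses (i) and (ii) give line-symmetry, invoke the representation of $L$ as ${\mathcal C}(H)$, and deduce transitivity of $H$ from line-permutability via Theorem~\ref{thm:Wigner-restricted}. The only cosmetic over-claims are that $\bigvee\mathbcal{l}$ has height $2$ and that the rank equals the length --- neither is actually needed, since for line-permutability it suffices that every atom in $\mathbcal{l}\c$ is orthogonal to the chosen $p,q\in\mathbcal{l}$, and only length $\geq 4$ enters the application of \cite[Theorem~3.10]{PaVe1}.
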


Summarising the present section, we have the following description of non-Boolean line-symmetric orthosets.

\begin{corollary} \label{cor:line-symmetric-orthosets}
The non-Boolean line-symmetric orthosets of rank $\geq 4$ are, up to isomorphism, exactly those of the form $(P(H),\perp)$, where $H$ is a transitive Hermitian space of dimension $\geq 4$.

In particular, the non-Boolean line-symmetric orthosets of finite rank $n \geq 4$ are, up to isomorphism, exactly the orthosets of the form $(P(F^n),\perp)$, where $F$ is a Pythagorean formally real $\star$-sfield.
\end{corollary}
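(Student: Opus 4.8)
The plan is to assemble the results established earlier in this section, since the corollary merely records the correspondence of Theorem~\ref{thm:line-symmetric-orthoset} together with its converse and specialises it to the finite case.

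First I would treat the inclusion that every transitive Hermitian space $H$ of dimension $\geq 4$ gives rise to a non-Boolean line-symmetric orthoset of rank $\geq 4$. Line-symmetry is precisely the content of Proposition~\ref{prop:orthoset-from-transitive-Hermitian-space}, and the rank of $(P(H),\perp)$ coincides with the dimension of $H$, hence is $\geq 4$. For the non-Boolean property I would observe that a Hermitian space of dimension $\geq 2$ always possesses distinct, non-orthogonal one-dimensional subspaces: given linearly independent $u, v \in H\withoutzero$, either $\lin u \notperp \lin v$ outright, or else $u \perp v$, in which case $\herm{u}{u+v} = \herm{u}{u} \neq 0$ by anisotropy, so that $\lin u \notperp \lin{u+v}$.

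For the reverse inclusion I would simply invoke Theorem~\ref{thm:line-symmetric-orthoset}: a non-Boolean line-symmetric orthoset of rank $\geq 4$ is isomorphic to $(P(H),\perp)$ for a transitive Hermitian space $H$, whose dimension then equals the rank and is thus $\geq 4$. Together with the previous paragraph this yields the first assertion.

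For the finite-rank refinement I would combine this with Lemma~\ref{lem:standard-inner-product}. If the rank is a finite $n \geq 4$, the transitive space $H$ obtained above is $n$-dimensional, so by that lemma $F$ is Pythagorean and formally real and $H$ is isomorphic to $F^n$ under the standard inner product; Lemma~\ref{lem:semiisometry-induces-embedding} then transports this isometry to an isomorphism $(P(H),\perp) \cong (P(F^n),\perp)$. Conversely, the converse part of Lemma~\ref{lem:standard-inner-product} shows $F^n$ to be transitive for every Pythagorean formally real $\star$-sfield $F$, whence the first inclusion makes $(P(F^n),\perp)$ non-Boolean, line-symmetric, and of rank $n$. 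As the corollary only stitches together prior results, no real obstacle arises; the sole points demanding a little care are the non-Boolean verification and the reading of ``exactly \dots\ up to isomorphism'' as the coincidence of the two classes of orthosets, not as a bijection between the representing spaces.
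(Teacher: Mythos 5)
Your proof is correct and follows essentially the same route as the paper, which simply cites Theorem~\ref{thm:line-symmetric-orthoset}, Proposition~\ref{prop:orthoset-from-transitive-Hermitian-space}, and Lemma~\ref{lem:standard-inner-product}; you merely make explicit the small details the paper leaves implicit, such as the verification that $(P(H),\perp)$ is non-Boolean for $\dim H \geq 2$.
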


\begin{proof}
The assertions hold by Theorem~\ref{thm:line-symmetric-orthoset}, Proposition~\ref{prop:orthoset-from-transitive-Hermitian-space}, and Lemma~\ref{lem:standard-inner-product}.
\end{proof}

\section{Abelian groups acting transitively on a line}
\label{sec:quadratic-orthosets}

In this section, we further narrow down the considered class of orthosets, replacing line-symmetry with the following stronger property.

\begin{definition} \label{def:quadratic-OS}
We call a line-symmetric orthoset $(X,\perp)$ {\it quadratic} if (i) $(X,\perp)$ is line-transitive and (ii), for any line $\mathbcal l$, $\Aut(X,{\mathbcal l})$ contains an abelian subgroup that acts transitively on $\mathbcal l$.
\end{definition}

It is clear that any quadratic orthoset is line-symmetric. Our guiding examples are the real Hilbert spaces.

\begin{example} \label{ex:Hilbert-space-quadratic}
For a real Hilbert space $H$, $(P(H),\perp)$ is a quadratic orthoset. Indeed, the lines of $P(H)$ correspond to the two-dimensional subspaces of $H$. Given a two-dimensional subspace $M$, the group of rotations of $M$ is abelian and acts transitively on $M$.
\end{example}

Example~\ref{ex:Hilbert-space-quadratic} can be generalised. For a finite-dimensional subspace $M$ of a quadratic space $H$, we put, as above, $\O(H,M) = \{ U \in \O(H) \colon U|_{M\c} = \id_{M\c} \}$ and we additionally define
\[ \SO(H,M) \;=\; \{ U \in \O(H,M) \colon \det U|_M = 1 \}. \]

\begin{proposition} \label{prop:quadratic-space-over-ordered-field}
Let $H$ be a transitive quadratic space. Then $(P(H),\perp)$ is a quadratic orthoset.
\end{proposition}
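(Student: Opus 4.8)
The plan is to verify the two defining conditions of Definition~\ref{def:quadratic-OS} in turn, the first being essentially free and the second carrying the actual content. Line-symmetry, and with it line-transitivity (condition~(i)), is immediate: a quadratic space is a special Hermitian space, and it is transitive by hypothesis, so Proposition~\ref{prop:orthoset-from-transitive-Hermitian-space} already yields that $(P(H),\perp)$ is line-symmetric. What remains is condition~(ii), namely to exhibit, for each line, an abelian subgroup of its stabiliser that acts transitively on it.

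So I would fix a line ${\mathbcal l}$ of $(P(H),\perp)$ and write $X = P(H)$. As observed in the proof of Proposition~\ref{prop:orthoset-from-transitive-Hermitian-space}, every line has the form $P(M)$ for a two-dimensional subspace $M \subseteq H$; since $M$ is orthoclosed and $P(M)\c = P(M\c)$, this gives $\Aut(X,{\mathbcal l}) = \Aut(P(H),P(M))$. Because $H$ is transitive it admits unit vectors by Lemma~\ref{lem:transitive-1}, so $M$ carries an orthonormal basis $e_1, e_2$ and every one-dimensional subspace of $M$ contains a unit vector. The subgroup I would use is $\SO(H,M)$, the rotations of $M$ extended by the identity on $M\c$.

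The heart of the argument, and the only step needing genuine computation, is to show that $\SO(H,M)$ is abelian and acts transitively on $P(M)$. Expressing the elements of $\SO(M)$ in the orthonormal basis $e_1, e_2$, an orthogonal matrix of determinant $1$ is forced into the shape $\begin{pmatrix} a & -b \\ b & a \end{pmatrix}$ with $a^2 + b^2 = 1$, and a direct multiplication shows that two such matrices commute; hence $\SO(H,M)$ is abelian. For transitivity, given a unit vector $u = a e_1 + b e_2$ (so $a^2 + b^2 = 1$), the displayed matrix sends $e_1$ to $u$, so $\SO(M)$ is already transitive on the unit vectors of $M$, and therefore on $P(M)$, since each one-dimensional subspace of $M$ contains a unit vector.

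Finally I would transfer this back to the orthoset. Each $U \in \SO(H,M)$ lies in $\O(H)$ and restricts to the identity on $M\c$, so by Theorem~\ref{thm:Wigner} the induced map $P(U)$ is an automorphism of $(P(H),\perp)$ fixing $P(M)\c = P(M\c)$ pointwise; thus $P(U) \in \Aut(P(H),P(M)) = \Aut(X,{\mathbcal l})$. As $P$ is a group homomorphism, $P(\SO(H,M))$ is an abelian subgroup of $\Aut(X,{\mathbcal l})$, and it acts transitively on ${\mathbcal l} = P(M)$ because $\SO(H,M)$ does. This establishes condition~(ii). The one point I expect to require care rather than routine bookkeeping is the abelianness and transitivity of the two-dimensional special orthogonal group: no trigonometric parametrisation is available over a general Pythagorean formally real field, but the existence of unit vectors makes the rotation matrices $\begin{pmatrix} a & -b \\ b & a \end{pmatrix}$ available and reduces everything to the elementary relation $a^2 + b^2 = 1$.
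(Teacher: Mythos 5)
Your proposal is correct and follows essentially the same route as the paper's proof: reduce to line-transitivity via Proposition~\ref{prop:orthoset-from-transitive-Hermitian-space}, use Lemma~\ref{lem:transitive-1} to get an orthonormal basis of a two-dimensional subspace $M$, and exhibit $\SO(H,M)$ as the abelian group of matrices $\begin{smm} \alpha & -\beta \\ \beta & \alpha \end{smm}$ with $\alpha^2+\beta^2=1$ acting transitively on $P(M)$. The paper merely states the abelianness and transitivity as an observation, whereas you spell out the (correct) computations; the only cosmetic difference is that the paper separately notes the trivial case where $H$ has no lines.
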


\begin{proof}
To begin with, we observe again that the lines of $(P(H),\perp)$ are the orthoclosed subsets of the form $P(M)$ for some two-dimensional subspace $M$ of $H$. If $H$ is at most one-dimensional, the orthoset $(P(H),\perp)$ does not contain a line and hence is quadratic. Let us assume that $H$ is at least two-dimensional.

By Proposition~\ref{prop:orthoset-from-transitive-Hermitian-space}, $(P(H),\perp)$ is line-transitive. Let us fix a two-dimensional subspace $M$. By Lemma~\ref{lem:transitive-1}, $H$ admits unit vectors and hence $M$ possesses an orthonormal basis. Let us identify the elements of $\SO(H,M)$ with their restriction to $M$ and represent them as $2 \times 2$-matrices. Then $\SO(H,M)$ consists of the matrices of the form $\begin{smm} \alpha & -\beta \\ \beta & \alpha \end{smm}$, where $\alpha^2 + \beta^2 = 1$. We observe that $\SO(H,M)$ is abelian and $P(\SO(H,M))$ acts transitively on $P(M)$. 
\end{proof}

Again, not all quadratic orthosets arise from inner-product spaces.

\begin{example} \label{ex:Boolean-quadratic}
Any Boolean orthoset is quadratic.
\end{example}

\begin{example} \label{ex:trivial-quadratic}
For any set $X$, $\,(X,\emptyset)$ is quadratic. Indeed, if $X$ has at most one element, there is no line in $(X,\emptyset)$. Otherwise, the only line of $(X,\emptyset)$ is $X$. There are abelian groups of any finite and of countably infinite cardinality and it follows from the upward L\" owenheim-Skolem Theorem that there are in fact abelian groups of any cardinality. Hence $X$ can be endowed with the structure of an abelian group that acts on itself.
\end{example}

Our aim is to show that, apart from the Boolean case and the case of low rank, the quadratic orthosets are exactly those arising from transitive quadratic spaces. The following lemma contains the key argument. Its proof is essentially given in \cite{Vet4}; here, we mention the necessary adaptations only.

\begin{lemma} \label{lem:F-is-field}
Let $H$ be an at least four-dimensional transitive Hermitian space over a $\star$-sfield $F$ and assume that $(P(H),\perp)$ is quadratic. Then $H$ is a quadratic space.
\end{lemma}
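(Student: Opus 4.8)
The plan is to reduce the whole question to a Hermitian plane and there to exploit the abelian transitive action on a projective line. Since $\dim H \geq 4$, I would fix a two-dimensional subspace $M$ of $H$; then $M\c$ is at least two-dimensional, so Theorem~\ref{thm:Wigner-restricted} identifies $\Aut(P(H), P(M))$ with the group $\U(H,M)$ of unitary operators fixing $M\c$ pointwise, and restriction to $M$ identifies the latter with the full unitary group $\U(M)$ of the Hermitian plane $M$. The quadratic hypothesis thus yields an abelian subgroup $G \leq \U(M)$ whose induced action on the projective line $P(M)$ is transitive. Since $H$ is transitive it admits unit vectors (Lemma~\ref{lem:transitive-1}), so $M$ has an orthonormal basis $e_1, e_2$, and $F$ is already known to be Pythagorean and formally real of characteristic $0$ (Lemma~\ref{lem:transitive-2}). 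What remains to be shown is that the abelian transitive action forces $F$ to be commutative and $\alpha^\star = \alpha$ for all $\alpha$; this is exactly the content re-derived, with minor adaptations, from \cite{Vet4}.

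The next step is to extract a distinguished element of $G$. By transitivity there is $g \in G$ with $g(\lin{e_1}) = \lin{e_2}$, and since a unitary operator carries orthocomplements to orthocomplements, $g$ is the ``axis swap'': $g(e_1) = b\,e_2$ and $g(e_2) = a\,e_1$ with $a a^\star = b b^\star = 1$. As $G$ is abelian, every $h \in G$, written $h(e_1) = p\,e_1 + r\,e_2$ and $h(e_2) = q\,e_1 + s\,e_2$, commutes with $g$. Comparing $gh$ and $hg$ on $e_1$ and $e_2$ (recalling that unitary operators are left-linear) yields the four relations $ra = bq$, $pb = bs$, $sa = ap$, $qb = ar$, valid for every $h \in G$, alongside the unitarity relations $p p^\star + r r^\star = 1$, $q q^\star + s s^\star = 1$, $p q^\star + r s^\star = 0$.

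The heart of the argument is to feed transitivity into these identities. From $pb = bs$ and $sa = ap$ one gets $b^{-1} p\, b = a\,p\,a^{-1}$, that is, $(ba)^{-1} p\,(ba) = p$, so $ba$ commutes with every first-row entry $p$ arising from $G$; the relations $ra = bq$ and $qb = ar$ constrain the second-row entries analogously, and using that all pairs in $G$ commute gives further such relations. As $h$ ranges over $G$, transitivity guarantees that $\lin{p\,e_1 + r\,e_2}$ runs through all of $P(M)$, so the entries range over enough of $F$ to turn these constraints into the commutativity of $F$. Once $F$ is commutative, the relations collapse to $s = p$, and the unitarity relations $p p^\star + r r^\star = 1$ and $p q^\star + r s^\star = 0$, combined with the Pythagorean formally real structure, pin down the involution and force $\alpha^\star = \alpha$.

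I expect this last algebraic extraction --- converting the relations obtained from a single abelian transitive group into commutativity of $F$ together with triviality of $^\star$ --- to be the main obstacle, and it is precisely the computation of \cite{Vet4}; the only genuine adaptation is the passage from the transitivity of an abelian \emph{group} on the line (our quadratic condition) to the specific generators used there. As a sanity check that the hypothesis is doing real work, abelian transitivity on a projective line is already strong enough to exclude $F = \Complexes$ with complex conjugation, since no abelian subgroup of $\U(\Complexes^2)$ acts transitively on $P(\Complexes^2)$, whereas the rotation group of the Euclidean plane is abelian and transitive on $P(\Reals^2)$.
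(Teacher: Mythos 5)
Your proposal follows essentially the same route as the paper: fix a two-dimensional subspace $M$, use Theorem~\ref{thm:Wigner-restricted} to transfer the abelian transitive subgroup of $\Aut(P(H),P(M))$ to an abelian subgroup of $\U(H,M)$, invoke Lemma~\ref{lem:transitive-2} for formal reality and characteristic $0$, and defer the algebraic core (commutativity of $F$ and triviality of $^\star$) to the computation of \cite[Lemma~18]{Vet4} --- which is exactly what the paper's proof does. The extra detail you give on setting up the axis-swap element and the commutation relations is a correct rendering of the opening of that computation.
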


\begin{proof}
Let $M$ be a two-dimensional subspace of $H$. By Theorem~\ref{thm:Wigner-restricted}, the map $\U(H,M) \to \Aut(P(H),P(M)) \komma U \mapsto P(U)$ is a group isomorphism. Furthermore, by assumption, there is an abelian subgroup of $\Aut(P(H),P(M))$ that acts transitively on $P(M)$. We conclude that there is an abelian subgroup $\N$ of $\U(H,M)$ such that $P(\N)$ acts transitively on $P(M)$.

By Lemma~\ref{lem:transitive-2}, $F$ is formally real and hence has characteristic $0$. In order to show that $^\star$ is the identity, we proceed as in case of \cite[Lemma~18]{Vet4}. We conclude that $F$ is commutative, that is, $H$ is quadratic.
\end{proof}

Thus we have the following representation of quadratic orthosets.

\begin{theorem} \label{thm:quadratic-orthoset}
Let $(X, \perp)$ be a quadratic orthoset of rank $\geq 4$ and assume that $(X,\perp)$ is not Boolean. Then there is a transitive quadratic space $H$ such that $(X, \perp)$ is isomorphic to $(P(H), \perp)$.
\end{theorem}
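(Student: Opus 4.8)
The plan is to chain together the two structural results already established, namely the representation theorem for non-Boolean line-symmetric orthosets and the lemma that forces commutativity of the underlying $\star$-sfield. Since every quadratic orthoset is by definition line-symmetric, the hypotheses of Theorem~\ref{thm:line-symmetric-orthoset} are met: $(X,\perp)$ is line-symmetric, non-Boolean, and of rank $\geq 4$. That theorem therefore supplies a transitive Hermitian space $H$ over some $\star$-sfield $F$ together with an isomorphism $(X,\perp) \cong (P(H),\perp)$.

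The next step is to transport the quadratic property across this isomorphism. Being quadratic is formulated entirely in terms of lines and the automorphism group of the orthoset, and both of these notions are invariant under orthoset isomorphisms; hence $(P(H),\perp)$ is itself a quadratic orthoset. Moreover, as noted in the preliminaries, the rank of $(P(H),\perp)$ coincides with the dimension of $H$, so $H$ is at least four-dimensional.

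I would then invoke Lemma~\ref{lem:F-is-field}: for an at least four-dimensional transitive Hermitian space whose associated orthoset is quadratic, $H$ must be a quadratic space, that is, $F$ is commutative and its involution is the identity. Since $H$ is already transitive from the first step, $H$ is a transitive quadratic space representing $(X,\perp)$, which is exactly the assertion.

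The genuine work here is concentrated in Lemma~\ref{lem:F-is-field}, which is where the passage from an arbitrary abelian group acting transitively on a line to the triviality of the involution actually takes place. Granting that lemma, the remaining argument is a routine composition of previously proved facts, and the only point requiring care is the (immediate) verification that both the quadratic property and the rank are faithfully carried along the isomorphism $(X,\perp) \cong (P(H),\perp)$.
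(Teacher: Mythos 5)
Your proposal is correct and follows exactly the paper's own argument, which cites Theorem~\ref{thm:line-symmetric-orthoset} and Lemma~\ref{lem:F-is-field} as the two ingredients. The only difference is that you spell out the routine transport of the quadratic property and the rank across the isomorphism, which the paper leaves implicit.
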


\begin{proof}
This is the consequence of Theorem~\ref{thm:line-symmetric-orthoset} and Lemma~\ref{lem:F-is-field}.
\end{proof}

We add again a lattice-theoretic formulation of Theorem~\ref{thm:quadratic-orthoset}. We denote the automorphism group of an ortholattice $L$ by $\Aut(L)$. Moreover, for an element $a$ of a lattice $L$, we write $\downset a = \{ b \in L \colon b \leq a \}$.

\begin{corollary}
Let $L$ be a complete, irreducible, atomistic ortholattice of length $\geq 4$. Assume moreover that {\rm (i)} for any lines $a, b \in L$, there is an automorphism $\phi$ of $L$ such that $\phi(a) = b$, {\rm (ii)} for any line $a \in L$, $\{ g \in \Aut(L) \colon g|_{\downset a\c} = \id_{\downset a\c} \}$ contains an abelian subgroup acting transitively on $\downset a$. Then there is a transitive quadratic space $H$ such that $L$ is isomorphic to ${\mathcal C}(H)$.
\end{corollary}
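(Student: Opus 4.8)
The plan is to reduce the statement to Theorem~\ref{thm:quadratic-orthoset} by passing from the ortholattice $L$ to the orthoset of its atoms, exactly as the two preceding lattice-theoretic corollaries reduce to the corresponding orthoset theorems. First I would let $X$ be the set of atoms of $L$, equipped with the relation $p \perp q$ iff $p \leq q\c$. The involution on $L$ makes $\perp$ symmetric, and irreflexivity follows since $p \not\leq p\c$ for $p \neq 0$. Completeness and atomisticity of $L$ then yield that $\eta \colon a \mapsto \{ p \in X \colon p \leq a \}$ is an isomorphism of $L$ onto the ortholattice ${\mathcal C}(X)$ of orthoclosed subsets of $(X,\perp)$; that $\eta$ preserves the orthocomplement rests on the identity $\bigwedge_{p \leq a} p\c = a\c$, which in turn uses $a = \bigvee_{p \leq a} p$. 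This is the standard dictionary underlying the earlier corollaries (cf.\ \cite{Ern,Vet3,PaVe1}).

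Under $\eta$ I would record the following correspondences, all routine transports of structure. The length of $L$ equals the rank of $(X,\perp)$, so the latter is $\geq 4$; the atoms of $L$ are the points of $X$; a line of $L$, i.e.\ a join $p \vee q$ of two distinct atoms, corresponds to $\{p,q\}\cc = p \star q$, hence to a line $\mathbcal l$ of $(X,\perp)$; and $\eta$ induces a group isomorphism $\Aut(L) \cong \Aut(X)$, since a lattice automorphism permutes the atoms preserving $\perp$, while an orthoset automorphism extends to orthoclosed subsets. Because fixing every atom below $a\c$ is, by atomisticity, the same as fixing $\downset a\c$ pointwise, this isomorphism carries $\{ g \in \Aut(L) \colon g|_{\downset a\c} = \id_{\downset a\c} \}$ onto $\Aut(X,\mathbcal l)$, and it carries the atoms of $\downset a$ onto the points of $\mathbcal l$.

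With the dictionary in hand I would translate the two hypotheses. Condition~(i) becomes line-transitivity of $(X,\perp)$, and condition~(ii) becomes the requirement that, for every line $\mathbcal l$, the group $\Aut(X,\mathbcal l)$ contain an abelian subgroup acting transitively on $\mathbcal l$; these are precisely the two defining clauses of Definition~\ref{def:quadratic-OS}. Moreover $(X,\perp)$ is not Boolean: a Boolean orthoset $(X,\neq)$ has ${\mathcal C}(X) \cong 2^X$, which is reducible once $\lvert X \rvert \geq 2$, contradicting the irreducibility of $L$. Thus $(X,\perp)$ is a non-Boolean quadratic orthoset of rank $\geq 4$, and Theorem~\ref{thm:quadratic-orthoset} furnishes a transitive quadratic space $H$ with $(X,\perp) \cong (P(H),\perp)$. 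Consequently $L \cong {\mathcal C}(X) \cong {\mathcal C}(P(H)) \cong {\mathcal C}(H)$, as required.

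The genuinely new content here is essentially nil: the whole argument is a transport of structure along $L \cong {\mathcal C}(X)$. The only point I would take care over is the reading of hypothesis~(ii): the phrase ``acting transitively on $\downset a$'' must be understood as transitivity on the atoms below the line $a$, which is what matches the transitive action on the line $\mathbcal l$ demanded in Definition~\ref{def:quadratic-OS}. This is the step most likely to cause confusion; every other verification — that $\eta$ is an ortholattice isomorphism, that $\Aut(L) \cong \Aut(X)$ respects the relevant point-fixing subgroups, and that lines match lines — is mechanical given atomisticity and completeness.
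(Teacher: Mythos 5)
Your proof is correct and is exactly the route the paper intends: the corollary is stated there without proof as the lattice-theoretic reformulation of Theorem~\ref{thm:quadratic-orthoset}, and your passage to the orthoset of atoms, with the dictionary between lines, automorphism groups and point-fixing subgroups, is precisely that translation. The one step you label ``routine'' that is not a pure transport of structure is the claim that length $\geq 4$ of $L$ gives rank $\geq 4$ of $(X,\perp)$ --- in a general complete atomistic ortholattice a long chain need not yield pairwise orthogonal atoms --- but under your translated hypotheses the orthoset is flag-transitive and non-Boolean, hence linear, and then condition (L1) converts a maximal chain into an orthogonal set of atoms, which is the same machinery the paper's earlier corollaries delegate to \cite{PaVe1}.
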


We may summarise our results as follows.

\begin{corollary} \label{cor:quadratic-orthosets}
The non-Boolean quadratic orthosets of rank $\geq 4$ are, up to isomorphism, exactly those of the form $(P(H),\perp)$, where $H$ is a transitive quadratic space of dimension $\geq 4$.

In particular, the non-Boolean quadratic orthosets of finite rank $n \geq 4$ are, up to isomorphism, exactly the orthosets of the form $(P(F^n),\perp)$, where $F$ is a Pythagorean formally real field equipped with the identity involution.
\end{corollary}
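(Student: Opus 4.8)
The plan is to assemble the results of this and the preceding section, exactly as in the proof of Corollary~\ref{cor:line-symmetric-orthosets}.

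For the first assertion I would argue both directions. If $(X,\perp)$ is a non-Boolean quadratic orthoset of rank $\geq 4$, then Theorem~\ref{thm:quadratic-orthoset} immediately supplies a transitive quadratic space $H$ with $(X,\perp) \cong (P(H),\perp)$; since the rank of an orthoset is invariant under isomorphism and the rank of $(P(H),\perp)$ equals $\dim H$, we obtain $\dim H \geq 4$. Conversely, let $H$ be a transitive quadratic space of dimension $\geq 4$. Proposition~\ref{prop:quadratic-space-over-ordered-field} shows that $(P(H),\perp)$ is a quadratic orthoset, and its rank is $\dim H \geq 4$. It remains to check that it is not Boolean: since $\dim H \geq 2$, transitivity (Lemma~\ref{lem:transitive-1}) yields an orthonormal pair $e_1, e_2 \in H$, and then $\lin{e_1}$ and $\lin{e_1 + e_2}$ are distinct one-dimensional subspaces with $\herm{e_1}{e_1 + e_2} = 1 \neq 0$, so they are not orthogonal. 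Hence not all distinct elements of $P(H)$ are orthogonal.

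For the ``in particular'' statement I would specialise to finite rank $n \geq 4$ and feed the spaces $H$ produced above into Lemma~\ref{lem:standard-inner-product}. Because $H$ is a quadratic space, $F$ is commutative and carries the identity involution; Lemma~\ref{lem:standard-inner-product} then identifies $H$ with $F^n$ under the standard inner product and shows $F$ to be Pythagorean and formally real. Conversely, for any Pythagorean formally real field $F$ with the identity involution, the same lemma makes $F^n$ a transitive quadratic space, and the first part of the corollary shows $(P(F^n),\perp)$ to be a non-Boolean quadratic orthoset of rank $n$.

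I do not expect a genuine obstacle here, as the statement is a summary of Theorem~\ref{thm:quadratic-orthoset}, Proposition~\ref{prop:quadratic-space-over-ordered-field}, and Lemma~\ref{lem:standard-inner-product}. The only points needing care are the routine bookkeeping that identifies ``quadratic space'' with ``commutative $F$ and trivial involution'', and the small verification of non-Booleanness; both are immediate once the earlier results are in place.
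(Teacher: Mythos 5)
Your proof is correct and follows exactly the route the paper takes: the paper's own proof simply cites Theorem~\ref{thm:quadratic-orthoset}, Proposition~\ref{prop:quadratic-space-over-ordered-field}, and Lemma~\ref{lem:standard-inner-product}, which are precisely the three results you assemble. The only difference is that you spell out the rank bookkeeping and the non-Booleanness check (via $\herm{e_1}{e_1+e_2}=1\neq 0$), which the paper leaves implicit; both verifications are correct.
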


\begin{proof}
The assertions hold by Theorem~\ref{thm:quadratic-orthoset}, Proposition~\ref{prop:quadratic-space-over-ordered-field}, and Lemma~\ref{lem:standard-inner-product}.
\end{proof}

We conclude the section with the observation that, for any quadratic orthoset $(X,\perp)$ of rank $\geq 4$, the abelian subgroup of $\Aut(X,\mathbcal l)$ acting transitively on a line $\mathbcal l$ is unique.

\begin{proposition} \label{prop:quadratic-space-over-ordered-field-D}
Let $(X,\perp)$ be a quadratic orthoset of rank $\geq 4$. Then there is, for any line $\mathbcal l$, a unique abelian subgroup of $\Aut(X,\mathbcal l)$ acting transitively on $\mathbcal l$. Moreover, any two such groups are conjugate subgroups of $\Aut(X)$.
\end{proposition}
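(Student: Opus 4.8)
The plan is to reduce the statement to a concrete computation inside the orthogonal group of a two-dimensional quadratic space. If $(X,\perp)$ is Boolean, every line is a two-element set $\{e,f\}$ with $\{e,f\}\c = X\setminus\{e,f\}$, so $\Aut(X,\mathbcal l)$ is the group of order two interchanging $e$ and $f$; this is abelian and is the only subgroup acting transitively on $\mathbcal l$, while conjugacy of two such groups follows from line-transitivity (Example~\ref{ex:Boolean-line-symmetric}). So I would assume $(X,\perp)$ not Boolean. By Theorem~\ref{thm:quadratic-orthoset} we may then take $X = P(H)$ for a transitive quadratic space $H$ over a Pythagorean formally real field $F$, with $\dim H \geq 4$ (the rank of $X$). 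Fix a line $\mathbcal l = P(M)$, where $M$ is a two-dimensional subspace; since $\dim M\c \geq 2$, Theorem~\ref{thm:Wigner-restricted} (with $\U$ replaced by $\O$, as $H$ is quadratic) identifies $\Aut(X,\mathbcal l)$ with $\O(H,M)$ via $U\mapsto P(U)$, and restriction to $M$ further identifies $\O(H,M)$ with $\O(2,F)$, using that $M$ has an orthonormal basis because $H$ admits unit vectors. Under these identifications the action of $\Aut(X,\mathbcal l)$ on $\mathbcal l$ becomes the map $P\colon\O(2,F)\to\Aut(P(M))$, whose kernel is $\{\pm\id\}$.

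I would single out the candidate subgroup $\SO(2,F)$, equivalently $P(\SO(H,M))$: it is abelian and acts transitively on $P(M)$ by the computation in Proposition~\ref{prop:quadratic-space-over-ordered-field}, which settles existence. The heart of the proof is to show that no other abelian subgroup of $\O(2,F)$ acts transitively. So let $N\leq\O(2,F)$ be abelian and transitive on $P(M)$. First I would show $N\subseteq\SO(2,F)$: a reflection conjugates a rotation to its inverse, so if $N$ contained a reflection, commutativity would force every rotation in $N$ to satisfy $r^2=\id$, hence (in characteristic $0$ and formally real $F$) $r\in\{\pm\id\}$; thus $N$ would be finite, whereas $F$, and so $P(M)$, is infinite, and no finite group can act transitively on it. Next, a direct computation shows that $\SO(2,F)$ acts on $P(M)$ with point stabiliser exactly $\{\pm\id\}$, so $\SO(2,F)/\{\pm\id\}$ acts simply transitively; hence any transitive $N\subseteq\SO(2,F)$ satisfies $N\cdot\{\pm\id\}=\SO(2,F)$.

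The decisive step is to upgrade $N\cdot\{\pm\id\}=\SO(2,F)$ to $N=\SO(2,F)$, which amounts to excluding an index-two subgroup $N$ with $-\id\notin N$. The key observation is that $-\id$ is a square in $\SO(2,F)$: the rotation $\begin{smm}0&-1\\1&0\end{smm}$ squares to $-\id$. Consequently every homomorphism $\SO(2,F)\to\{\pm1\}$ kills $-\id$, so $-\id$ lies in every index-two subgroup; therefore $-\id\in N$ and $N=\SO(2,F)$. This establishes that the abelian transitive subgroup of $\Aut(X,\mathbcal l)$ is unique, namely $P(\SO(H,M))$. I expect this squareness argument to be the main obstacle, in the sense that it is the one genuinely group-theoretic input, the remainder being bookkeeping through the identifications.

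Finally, for the conjugacy statement I would use that $(X,\perp)$ is line-transitive: given lines $\mathbcal l,\mathbcal m$, choose $\phi\in\Aut(X)$ with $\phi(\mathbcal l)=\mathbcal m$. Since $\phi(\mathbcal l\c)=\phi(\mathbcal l)\c=\mathbcal m\c$ by Lemma~\ref{lem:automorphism-preserves-structure}(ii), conjugation by $\phi$ carries $\Aut(X,\mathbcal l)$ onto $\Aut(X,\mathbcal m)$, and it carries an abelian subgroup acting transitively on $\mathbcal l$ to an abelian subgroup acting transitively on $\mathbcal m$. By the uniqueness just proved, this image is the distinguished subgroup attached to $\mathbcal m$, so the two distinguished subgroups are conjugate in $\Aut(X)$.
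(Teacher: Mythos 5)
Your proof is correct, and its overall architecture is the same as the paper's: reduce via Theorems~\ref{thm:quadratic-orthoset} and~\ref{thm:Wigner-restricted} to abelian subgroups $\N$ of $\O(H,M)\cong\O(2,F)$ whose image acts transitively on $P(M)$, exhibit $\SO(H,M)$ as the distinguished one, and show it is the only one. Where you genuinely diverge is in how the uniqueness $\N=\SO(H,M)$ is established. The paper runs an explicit matrix case analysis: it first rules out $\begin{smm}0&1\\1&0\end{smm},\begin{smm}0&-1\\-1&0\end{smm}\in\N$ by non-commutation, deduces that $\begin{smm}0&-1\\1&0\end{smm}$ and its negative (hence $-\id$) lie in $\N$, uses non-commutation of $\begin{smm}0&-1\\1&0\end{smm}$ with reflections to get $\N\subseteq\SO(H,M)$, and then uses $-\id\in\N$ together with transitivity for the reverse inclusion. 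You instead invoke three structural facts: a reflection conjugates every rotation to its inverse (so an abelian transitive subgroup contains no reflection), the stabiliser in $\SO(2,F)$ of a point of $P(M)$ is exactly $\{\pm\id\}$ (so a transitive $N\leq\SO(2,F)$ has index at most $2$), and $-\id$ is a square in $\SO(2,F)$ (so it lies in every subgroup of index at most $2$, forcing $N=\SO(2,F)$). Your route makes the decisive role of $-\id$ being a square more transparent, and your conjugacy argument (line-transitivity plus the uniqueness just proved, carried out at the orthoset level) and your explicit treatment of the Boolean case are slightly more self-contained than the paper's. The only point to tighten in a final write-up: in the step ``thus $N$ would be finite'' you should add that any two reflections in $N$ differ by a rotation of $N$, so that $N\cap\SO(2,F)\subseteq\{\pm\id\}$ indeed bounds $|N|$ by $4$.
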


\begin{proof}
If $(X,\perp)$ is Boolean, the statement is clear. Assume that $(X,\perp)$ is non-Boolean and let $H$ be the quadratic space representing $(X,\perp)$ according to Theorem~\ref{thm:quadratic-orthoset}.

Let $M$ be a two-dimensional subspace of $H$. We identify the elements of $\O(H,M)$ again with the matrix representation of their restriction to~$M$ with respect to some fixed orthonormal basis of $M$. Then $\SO(H,M) = \big\{ \begin{smm} \alpha & -\beta \\ \beta & \alpha \end{smm} \colon \alpha^2 + \beta^2 = 1 \big\}$. We have that $P(\SO(H,M))$ is abelian and acts transitively on $P(M)$.

We have to show that $P(\SO(H,M))$ is the only subgroup of $\Aut(P(H),P(M))$ with these properties. Let $\N$ be an abelian subgroup of $\O(H,M)$ such that $P(\N)$ acts transitively on $P(M)$. In view of the isomorphism between the groups $\O(H,M)$ and $\Aut(P(H),P(M))$ according to Theorem~\ref{thm:Wigner-restricted}, we have to show that $\N = \SO(H,M)$.

We observe next that the elements of $\O(H,M)$ are of the form
\[ {\rm (a)} \begin{smm} \alpha & -\beta \\ \beta & \alpha \end{smm}
\quad\text{or}\quad
{\rm (b)} \begin{smm} \alpha & \beta \\ \beta & -\alpha \end{smm},
\quad \text{where $\alpha^2 + \beta^2 = 1$.} \]
We have that $\begin{smm} 0 & 1 \\ 1 & 0 \end{smm}$ does not commute with any matrix of the form (a) or (b) if $\alpha, \beta \neq 0$. Hence $\begin{smm} 0 & 1 \\ 1 & 0 \end{smm} \in \N$ would imply that $\N$ consists of matrices all of whose entries are in $\{-1, 0, 1\}$, contradicting the transitivity assumption. We conclude that $\begin{smm} 0 & 1 \\ 1 & 0 \end{smm} \notin \N$ and similarly we see that $\begin{smm} 0 & -1 \\ -1 & 0 \end{smm} \notin \N$. But $\N$ must contain an element sending $\linbig{\begin{smm} 1 \\ 0 \end{smm}}$ to $\linbig{\begin{smm} 0 \\ 1 \end{smm}}$, that is, either $\begin{smm} 0 & -1 \\ 1 & 0 \end{smm}$ or $\begin{smm} 0 & 1 \\ -1 & 0 \end{smm}$. As the square is in both cases $\begin{smm} -1 & 0 \\ 0 & -1 \end{smm}$, we have that $\begin{smm} 0 & -1 \\ 1 & 0 \end{smm}, \, \begin{smm} 0 & 1 \\ -1 & 0 \end{smm} \in \N$.

Since $\begin{smm} 0 & -1 \\ 1 & 0 \end{smm}$ does not commute with any matrix of the form (b), $\N$ consists solely of matrices of the form (a). This means $\N \subseteq \SO(H,M)$.

Conversely, let $\alpha^2 + \beta^2 = 1$. Let $U \in \N$ be a map sending $\linbig{\begin{smm} 1 \\ 0 \end{smm}}$ to $\linbig{\begin{smm} \alpha \\ \beta \end{smm}}$. Then either $U = \begin{smm} \alpha & -\beta \\ \beta & \alpha \end{smm}$ or $U = \begin{smm} -\alpha & \beta \\ -\beta & -\alpha \end{smm}$. As $\begin{smm} -1 & 0 \\ 0 & -1 \end{smm} \in \N$, we conclude that actually both these elements are in $\N$. Hence $\SO(H,M) \subseteq \N$.

For any two-dimensional subspaces $M_1$ and $M_2$ of $H$, $\O(H,M_1)$ and $\O(H,M_2)$ are conjugate subgroups of $\O(H)$. The last assertion follows.
\end{proof}

\section{Minimal orthosets}
\label{sec:Minimal-orthosets}

We have considered three different classes of orthosets: linear, line-symmetric, and quadratic ones. In this final section, we raise the question whether there is, among the orthosets of a given type and rank, one that is minimal in the sense that it embeds into all the other ones. It turns out that there is, for each finite $n \geq 4$, a quadratic orthoset of rank $n$ that embeds into any other quadratic orthoset of the same rank. For the linear or line-symmetric orthosets, a similar statement does not hold.

It is clear that the issue is closely related to the existence of homomorphisms between the corresponding sfields.

\begin{proposition} \label{prop:homomorphism-between-scalar-sfields}
Let $H_1$ be a Hermitian space over the $\star$-sfield $F_1$ and let $H_2$ be a Hermitian space over the $\star$-sfield $F_2$. Assume that $H_1$ and $H_2$ have the same finite dimension $n \geq 3$ and let $\phi \colon P(H_1) \to P(H_2)$ be an embedding of orthosets. Then there is a homomorphism $\sigma \colon F_1 \to F_2$.

If in this case $F_2$ is a field equipped with the identity involution, then so is $F_1$.
\end{proposition}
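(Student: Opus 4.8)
The plan is to reduce both assertions to Theorem~\ref{thm:embedding-of-projective-Hermitian-spaces}(i) and then read off what we need from the algebraic data attached to the resulting semiisometry. First I would apply that theorem: since $H_1$ and $H_2$ have the same finite dimension $n \geq 3$ and $\phi$ is an embedding of orthosets, there is a semiisometry $T \colon H_1 \to H_2$ with $\phi = P(T)$. By definition a semiisometry is semilinear, hence carries an associated homomorphism $\sigma \colon F_1 \to F_2$ of sfields. This already settles the first assertion, with essentially no work beyond invoking the theorem.

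For the second assertion I would exploit the compatibility relation (\ref{fml:quasiunitary-noch}) between the two involutions, which holds for the scalar $\lambda \in F_2 \setminus \{0\}$ belonging to $T$: for every $\alpha \in F_1$ we have $\alpha^{\sigma \star_2} = \lambda^{-1} \alpha^{\star_1 \sigma} \lambda$. Now suppose $F_2$ is a field equipped with the identity involution. Commutativity of $F_2$ renders the conjugation by $\lambda$ trivial, so the right-hand side collapses to $\alpha^{\star_1 \sigma}$; and since $\star_2 = \id$, the left-hand side is simply $\alpha^\sigma$. Hence $\alpha^\sigma = \alpha^{\star_1 \sigma}$ for all $\alpha \in F_1$.

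The closing step is to cancel $\sigma$. Being a homomorphism of division rings, $\sigma$ is injective: its kernel is a two-sided ideal not containing $1$, hence trivial. Therefore $\alpha^{\star_1} = \alpha$ for every $\alpha \in F_1$, that is, the involution $\star_1$ is the identity. But $\star_1$ is an \emph{anti}automorphism, so $(\alpha\beta)^{\star_1} = \beta^{\star_1} \alpha^{\star_1}$, which with $\star_1 = \id$ reads $\alpha\beta = \beta\alpha$. Thus $F_1$ is commutative, i.e.\ a field carrying the identity involution, as claimed.

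I do not expect a genuine obstacle here, since all the geometric content is absorbed by Theorem~\ref{thm:embedding-of-projective-Hermitian-spaces} and what remains is formal manipulation of (\ref{fml:quasiunitary-noch}). The two points that deserve care are the justification that $\sigma$ is injective—so that $\alpha^\sigma = \alpha^{\star_1 \sigma}$ may legitimately be cancelled to $\alpha = \alpha^{\star_1}$—and the observation that an identity antiautomorphism actually \emph{forces} commutativity of $F_1$, rather than merely being consistent with it.
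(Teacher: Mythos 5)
Your proof is correct and follows essentially the same route as the paper's: both invoke Theorem~\ref{thm:embedding-of-projective-Hermitian-spaces}(i) to obtain a semiisometry $T$ with its associated homomorphism $\sigma$, and both read off $\star_1 = \id$ from the relation (\ref{fml:quasiunitary-noch}) after noting that conjugation by $\lambda$ is trivial in the commutative $F_2$ and that $\sigma$ is injective. The only cosmetic difference is that the paper obtains the commutativity of $F_1$ directly from the injective homomorphism into the commutative field $F_2$, whereas you derive it from $\star_1$ being an antiautomorphism equal to the identity; both observations are immediate.
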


\begin{proof}
By Theorem~\ref{thm:embedding-of-projective-Hermitian-spaces}, there is a semiisometry $T \colon H_1 \to H_2$ such that $\phi = P(T)$. In particular, there is a homomorphism $\sigma \colon F_1 \to F_2$ associated with $T$.

Assume that $F_2$ is commutative and the involution $^{\star_2}$ on $F_2$ is the identity. Clearly, also $F_1$ is then commutative. Moreover, by (\ref{fml:quasiunitary-noch}), $\alpha^\sigma = \alpha^{\sigma\star_2} = \lambda^{-1} \alpha^{\star_1 \sigma} \lambda =  \alpha^{\star_1 \sigma}$ for any $\alpha \in F_1$. It follows that the involution $^{\star_1}$ on $F_1$ is the identity as well.
\end{proof}

As regards the converse direction, we observe the following.

\begin{lemma} \label{lem:embedding-induced-by-sfield-homomorphism}
Let $\sigma \colon F_1 \to F_2$ be a $\star$-homomorphism between Pythagorean formally real \linebreak $\star$-sfields, and let $n \geq 1$. Then $\sigma$ induces an embedding of orthosets $\phi \colon P({F_1}^n) \to P({F_2}^n)$.
\end{lemma}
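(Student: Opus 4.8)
The plan is to produce the embedding $\phi$ as $P(T)$ for a suitable semiisometry $T$, and then to let Lemma~\ref{lem:semiisometry-induces-embedding} do the rest. First, since $F_1$ and $F_2$ are Pythagorean and formally real, Lemma~\ref{lem:standard-inner-product} guarantees that ${F_1}^n$ and ${F_2}^n$, equipped with the standard inner product (\ref{fml:standard-inner-product}), are genuine (anisotropic) Hermitian spaces; in particular the formally real hypothesis is exactly what makes these forms anisotropic, so that $P({F_1}^n)$ and $P({F_2}^n)$ are well-defined orthosets. This is the only place the Pythagorean and formally real assumptions are needed.

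The natural candidate for $T$ is the coordinatewise application of $\sigma$, namely
\[ T(\alpha_1, \ldots, \alpha_n) \;=\; (\alpha_1^\sigma, \ldots, \alpha_n^\sigma). \]
Since $\sigma$ is additive, so is $T$; and since $(\lambda \alpha_i)^\sigma = \lambda^\sigma \alpha_i^\sigma$ for $\lambda \in F_1$, we get $T(\lambda u) = \lambda^\sigma T(u)$. Thus $T$ is semilinear with associated homomorphism $\sigma$. Moreover $\sigma$, being a nonzero homomorphism of sfields, sends $1$ to a nonzero idempotent of $F_2$ and hence $\sigma(1) = 1$, so $T \neq 0$.

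The crux is the semiisometry identity (\ref{fml:quasiunitary}). For $u = (\alpha_i)$ and $v = (\beta_i)$ we compute
\[ \herm{T(u)}{T(v)}_2 \;=\; \sum_{i=1}^n \alpha_i^\sigma \, (\beta_i^\sigma)^{\star_2} \;=\; \sum_{i=1}^n \alpha_i^\sigma \, \beta_i^{\star_1 \sigma} \;=\; \Big(\sum_{i=1}^n \alpha_i \, \beta_i^{\star_1}\Big)^{\!\sigma} \;=\; {\herm{u}{v}_1}^\sigma, \]
where the second equality is the only genuine step: it uses that $\sigma$ is a $\star$-homomorphism, i.e.\ $\beta_i^{\sigma \star_2} = \beta_i^{\star_1 \sigma}$, and the third uses the multiplicativity and additivity of $\sigma$ to pull it out of the sum. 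This shows that $T$ satisfies (\ref{fml:quasiunitary}) with $\lambda = 1$, so $T$ is a semiisometry.

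With $T$ established as a semiisometry, Lemma~\ref{lem:semiisometry-induces-embedding} applies directly and yields that $\phi := P(T) \colon P({F_1}^n) \to P({F_2}^n)$, $\lin u \mapsto \lin{T(u)}$, is an embedding of orthosets, as required. I do not anticipate any real obstacle here: the entire argument reduces to the short computation above, whose single nontrivial ingredient is the involution-preserving property of the $\star$-homomorphism $\sigma$. The well-definedness and injectivity of $\phi$ are already subsumed in Lemma~\ref{lem:semiisometry-induces-embedding}.
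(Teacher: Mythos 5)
Your proof is correct and constructs exactly the same map as the paper, namely the coordinatewise application of $\sigma$; the paper simply defines $\phi$ directly on $P({F_1}^n)$ and declares injectivity, $\perp$-preservation and $\perp$-reflection obvious, whereas you make the underlying computation $\herm{T(u)}{T(v)}_2 = {\herm{u}{v}_1}^\sigma$ explicit and delegate the conclusion to Lemma~\ref{lem:semiisometry-induces-embedding}. Both routes hinge on the same use of the $\star$-homomorphism property, so this is essentially the paper's argument in slightly more formal packaging.
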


\begin{proof}
We may clearly define $\phi \colon P({F_1}^n) \to P({F_2}^n) \komma \linBig{\begin{smm} \alpha_1 \\ \vdots \\ \alpha_n \end{smm}} \mapsto \linBig{\begin{smm} {\alpha_1}^{\sigma} \\ \vdots \\ {\alpha_n}^{\sigma} \end{smm}}$. Obviously, $\phi$ is injective and $\perp$-preserving as well as $\perp$-reflecting.
\end{proof}

Let us first deal with the (non-Boolean) quadratic orthosets. The key question is whether there is a Pythagorean formally real field that embeds into any other such field.

In what follows, the field of real numbers $\Reals$, as well as subfields of $\Reals$, are understood to be equipped with the identity involution.

\begin{proposition} \label{prop:Hilbert-field}
There is a least Pythagorean subfield of \/ $\Reals$.
\end{proposition}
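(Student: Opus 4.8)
The plan is to define the candidate least field as the intersection of \emph{all} Pythagorean subfields of $\Reals$ and then to verify that this intersection is again Pythagorean. First I would observe that the collection $\mathcal P$ of Pythagorean subfields of $\Reals$ is nonempty: $\Reals$ itself is Pythagorean, since every sum of squares is a nonnegative real and every nonnegative real has a real square root. Set $R = \bigcap \mathcal P$. As an intersection of subfields of $\Reals$, $R$ is itself a subfield of $\Reals$ (it contains $\Rationals$ and is closed under the field operations), so the only substantive point is to show that $R$ inherits the Pythagorean property.

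To this end, let $\alpha, \beta \in R$. The element $\alpha^2 + \beta^2$ is a nonnegative real number and hence possesses a unique nonnegative real square root $\gamma = \sqrt{\alpha^2 + \beta^2}$. I claim $\gamma \in R$, that is, $\gamma \in F$ for every $F \in \mathcal P$. Indeed, fix $F \in \mathcal P$. Since $\alpha, \beta \in R \subseteq F$ and $F$ is Pythagorean, there is a $\delta \in F$ with $\delta^2 = \alpha^2 + \beta^2$. In $\Reals$ the equation $x^2 = \alpha^2 + \beta^2$ has only the two solutions $\pm\gamma$, so $\delta = \gamma$ or $\delta = -\gamma$; in either case $\gamma = \pm\delta \in F$, because $F$ is closed under negation. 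As $F \in \mathcal P$ was arbitrary, we obtain $\gamma \in R$, and the relation $\alpha^2 + \beta^2 = \gamma^2$ exhibits $R$ as Pythagorean. Consequently $R \in \mathcal P$, and since $R \subseteq F$ for every $F \in \mathcal P$ by construction, $R$ is the least Pythagorean subfield of $\Reals$, as required.

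The step I expect to carry the weight is the closure of the intersection under the Pythagorean operation, and it hinges entirely on working inside $\Reals$: it is the uniqueness (up to sign) of the square root of a nonnegative real that forces the \emph{same} witness $\gamma$ to lie in every Pythagorean subfield simultaneously. Without such an ambient ordered field, where square roots are canonically pinned down, distinct Pythagorean subfields could a priori realise $\alpha^2 + \beta^2$ through genuinely different square roots, and their intersection would not obviously remain Pythagorean. This is precisely why the statement is phrased for subfields of $\Reals$ rather than for abstract Pythagorean fields, and it is what justifies calling the resulting field $R$ the Hilbert field.
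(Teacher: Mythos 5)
Your proof is correct and follows the same route as the paper, which simply asserts that the intersection of all Pythagorean subfields of $\Reals$ is again Pythagorean; you supply the details the paper leaves to the reader, with the key observation (uniqueness of the nonnegative square root in $\Reals$) correctly identified as the reason the witness $\gamma$ lies in every Pythagorean subfield simultaneously.
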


\begin{proof}
We readily check that the intersection of Pythagorean subfields of \/ $\Reals$ is again a Pythagorean field.
\end{proof}

The least Pythagorean subfield of the reals is called the {\it Hilbert field}. We will see that it has the property to embed into any other Pythagorean formally real field. We start with a basic lemma on the extension of field homomorphisms.

\begin{lemma} \label{lem:embedding-extension}
Let $G \subseteq G'$ be an extension of fields and $\psi \colon G \to F$ be a homomorphism of fields. Let $f(x)\in G[x]$ be an irreducible polynomial over $G$, let $\alpha\in G' \setminus G$ be a root of $f$, and let $\beta\in F$ be a root of $\psi(f)(x) \in F[x]$. Then there is a homomorphism $\tilde{\psi} \colon G[\alpha] \to F$ such that $\psi \subseteq \tilde{\psi}$ and $\tilde{\psi}(\alpha) = \beta$.
\end{lemma}

\begin{proof}
We have that $G[\alpha] = \{g(\alpha)\in G' \colon g \text{ is a polynomial over } G\}$. We claim that we may define
\[ \tilde{\psi} \colon G[\alpha] \to F \komma g(\alpha) \mapsto \psi(g)(\beta). \]
Indeed, let $g_1(x), g_2(x)\in G[x]$ be such that $g_1(\alpha) = g_2(\alpha)$. Since $f(x)$ is irreducible over $G$ and $f(\alpha) = 0$, it follows that $g_1 - g_2 = f \cdot h$ for some $h(x) \in G[x]$. Hence $\psi(g_1)(\beta) - \psi(g_2)(\beta) = \psi(f)(\beta) \, \psi(h)(\beta) = 0$ as $\beta$ is a root of $\psi(f)$. Therefore $\psi(g_1)(\beta) = \psi(g_2)(\beta)$ and we conclude that the map $\tilde{\psi}$ is well defined.

Clearly, $\tilde{\psi}$ is a homomorphism with the required properties.
\end{proof}

\begin{theorem} \label{thm:Hilbert-field}
Let $R$ be the Hilbert field and let $F$ be any Pythagorean formally real field. Then there is a homomorphism $\nu \colon R \to F$.
\end{theorem}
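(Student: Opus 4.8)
The plan is to construct $\nu$ by successively extending the canonical embedding of the rationals along the square-root adjunctions that generate the Hilbert field, using Lemma~\ref{lem:embedding-extension} at every step and exploiting the Pythagorean property of $F$ to supply the roots that each extension requires.

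First, since $F$ is formally real it has characteristic $0$, so there is a (unique) homomorphism from the prime field $\Rationals \subseteq R$ into $F$. To organise the extension process I would apply Zorn's Lemma to the set $\mathcal P$ of all pairs $(K,\psi)$, where $K$ is a subfield of $R$ containing $\Rationals$ and $\psi \colon K \to F$ is a homomorphism, ordered by declaring $(K,\psi) \leq (K',\psi')$ when $K \subseteq K'$ and $\psi'|_K = \psi$. The union of a chain is again such a pair, so a maximal pair $(K,\psi)$ exists; note that $\mathcal P$ is genuinely a set, since subfields of $R$ and maps into $F$ form sets.

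The key step is to show $K = R$. As $R$ is by definition the least Pythagorean subfield of $\Reals$ and $K$ is a subfield of $R \subseteq \Reals$, it suffices to prove that $K$ is itself Pythagorean; minimality then forces $R \subseteq K$, whence $K = R$. Recall that for a field carrying the identity involution, being Pythagorean amounts to $1 + a^2$ being a square for every element $a$. So suppose for contradiction that some $a \in K$ has $\alpha = \sqrt{1+a^2} \notin K$, the root taken in $\Reals$. Since $R$ is Pythagorean and $a \in R$, we have $\alpha \in R$, so $K[\alpha]$ is a subfield of $R$ properly containing $K$. The polynomial $f = x^2 - (1 + a^2)$ is irreducible over $K$, its only roots $\pm\alpha$ lying outside the field $K$, and its image $\psi(f) = x^2 - (1 + \psi(a)^2)$ has a root $\gamma \in F$ precisely because $F$ is Pythagorean. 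By Lemma~\ref{lem:embedding-extension}, $\psi$ extends to a homomorphism $K[\alpha] \to F$ with $\alpha \mapsto \gamma$, contradicting maximality. Hence $K$ is Pythagorean, $K = R$, and $\nu = \psi \colon R \to F$ is the required homomorphism.

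I do not expect a serious obstacle here: the argument is essentially mechanical once the partial order is set up, and the only point needing care is the interplay at the extension step, where the Pythagorean property of the \emph{target} $F$ is exactly what splits the quadratic $x^2 - (1 + \psi(a)^2)$, while the Pythagorean property of $R$ keeps $\alpha$ inside the ambient field so that we never leave $R$. If one prefers to avoid Zorn's Lemma, the same extensions can be performed by an ordinary countable induction, enumerating $R$ and adjoining the missing square roots $\sqrt{1+a^2}$ one at a time, as $R$ is countable. (We may also remark that $\nu$ is automatically injective, being a homomorphism out of a field.)
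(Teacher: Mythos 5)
Your proposal is correct and follows essentially the same route as the paper: a Zorn's Lemma argument on partial embeddings of subfields of $R$ into $F$, with the maximal domain shown to be all of $R$ by extending along a quadratic $x^2-(a^2+b^2)$ (you use the equivalent normalised form $x^2-(1+a^2)$), where the Pythagorean property of $R$ keeps the root inside $R$ and that of $F$ supplies the image root needed for Lemma~\ref{lem:embedding-extension}. The paper phrases the key step contrapositively ($G\subsetneq R$ implies $G$ is not Pythagorean by minimality of $R$), which is the same argument as yours.
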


\begin{proof}
Let $\mathcal{C}$ be the collection of all embeddings of a subfield of $R$ into $F$. Then $\mathcal C$ is non-empty because there is an embedding of $\Rationals$ into $F$. Moreover, $\mathcal C$ is partially ordered by extension and every chain $\{ \varphi_i \colon i \in I \}$ in $\mathcal{C}$ has the upper bound $\bigcup_{i \in I} \varphi_i$. We conclude from Zorn's lemma that there is a maximal element $\psi \colon G \to F$ in $\mathcal{C}$. We claim that $G = R$.
 
Assume to the contrary that $G$ is a proper subfield of $R$. Then $G$ is not Pythagorean, hence there are $a,b\in G$ such that  the polynomial $f(x) = x^2-(a^2+b^2)\in G[x]$ has no root in $G$ and $f$ is therefore irreducible over $G$.  As $R$ is Pythagorean, the polynomial $f$ has a root $c\in R \setminus G$. As also $F$ is Pythagorean, there is a root $e\in F$ of the polynomial $\psi(f)(x) = x^2 - (\psi(a)^2 + \psi(b)^2) \in F[x]$. By Lemma~\ref{lem:embedding-extension}, there is a homomorphism $\tilde{\psi} \colon G[c]\to F$ extending $\psi$, in contradiction to the maximality of $\psi$.
\end{proof}

Let us apply this result to specify quadratic orthosets of a ``minimal'' type.

\begin{theorem} \label{thm:minimal-quadratic-orthoset}
Let $R$ be the Hilbert field and let $n \geq 4$ be finite. Then $(P(R^n),\perp)$ is a quadratic orthoset of rank $n$ that embeds into any non-Boolean quadratic orthoset of rank $n$.
\end{theorem}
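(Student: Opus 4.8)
The plan is to combine the structural results already established with the field-theoretic Theorem~\ref{thm:Hilbert-field}. By Proposition~\ref{prop:quadratic-space-over-ordered-field} together with Lemma~\ref{lem:standard-inner-product}, the space $R^n$ is a transitive quadratic space (the Hilbert field $R$ is by definition Pythagorean, and being a subfield of $\Reals$ it is formally real), so $(P(R^n),\perp)$ is indeed a quadratic orthoset of rank $n$. It remains to produce, for an arbitrary non-Boolean quadratic orthoset $(X,\perp)$ of rank $n$, an embedding of $(P(R^n),\perp)$ into it.

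First I would invoke Theorem~\ref{thm:quadratic-orthoset} to represent $(X,\perp)$ as $(P(H),\perp)$ for some transitive quadratic space $H$ of dimension $n$. By Corollary~\ref{cor:quadratic-orthosets}, or directly by Lemma~\ref{lem:standard-inner-product}, we may take $H = F^n$ for a Pythagorean formally real field $F$ equipped with the identity involution. Thus it suffices to embed $(P(R^n),\perp)$ into $(P(F^n),\perp)$.

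Next I would apply Theorem~\ref{thm:Hilbert-field} to obtain a field homomorphism $\nu \colon R \to F$. Since $R$ and $F$ both carry the identity involution, $\nu$ automatically preserves the involution, that is, $\nu$ is a $\star$-homomorphism in the sense of the paper. Lemma~\ref{lem:embedding-induced-by-sfield-homomorphism} then converts $\nu$ into an embedding of orthosets $\phi \colon P(R^n) \to P(F^n)$, which is precisely the desired embedding into $(X,\perp)$ after transporting along the isomorphism from Theorem~\ref{thm:quadratic-orthoset}.

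The substantive content of the theorem is entirely carried by Theorem~\ref{thm:Hilbert-field} and the representation Theorem~\ref{thm:quadratic-orthoset}; once those are in hand, the argument is essentially a matter of chaining the cited results. The only point demanding a moment's care is checking that the homomorphism $\nu$ furnished by Theorem~\ref{thm:Hilbert-field} qualifies as a $\star$-homomorphism so that Lemma~\ref{lem:embedding-induced-by-sfield-homomorphism} applies. This is immediate because, on fields with the identity involution, every field homomorphism preserves the involution trivially. I therefore expect no genuine obstacle in the proof of this particular statement; the real work has been relegated to the preceding theorems.
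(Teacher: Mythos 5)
Your proposal is correct and follows essentially the same route as the paper: establish that $(P(R^n),\perp)$ is quadratic via Lemma~\ref{lem:standard-inner-product} and Proposition~\ref{prop:quadratic-space-over-ordered-field}, represent an arbitrary non-Boolean quadratic orthoset of rank $n$ as $(P(F^n),\perp)$ via Corollary~\ref{cor:quadratic-orthosets}, and then chain Theorem~\ref{thm:Hilbert-field} with Lemma~\ref{lem:embedding-induced-by-sfield-homomorphism}. Your extra remark that the homomorphism $\nu$ is automatically a $\star$-homomorphism because both fields carry the identity involution is a small point the paper leaves implicit, but otherwise the arguments coincide.
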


\begin{proof}
By Lemma~\ref{lem:standard-inner-product} and Proposition~\ref{prop:quadratic-space-over-ordered-field}, $(P(R^n),\perp)$ is a non-Boolean quadratic orthoset. By Corollary~\ref{cor:quadratic-orthosets}, any further non-Boolean quadratic orthoset of rank $n$ is isomorphic to $(P(F^n),\perp)$ for some Pythagorean formally real field $F$. By Theorem~\ref{thm:Hilbert-field}, $R$ embeds into $F$ and hence, by Lemma~\ref{lem:embedding-induced-by-sfield-homomorphism}, $(P(R^n),\perp)$ embeds into $(P(F^n),\perp)$.
\end{proof}

In case of the other two classes of orthosets that we have considered in this paper, we must give a negative answer to the question whether we may identify minimal members. We restrict to the case of a finite rank $\geq 4$.

Let us consider the (non-Boolean) line-symmetric orthosets.

\begin{example} \label{ex:no-minimal-line-symmetric-orthsets}
Let $\Quaternions$ be the $\star$-sfield of quaternions, equipped with the standard conjugation given by $(a + bi + cj + dk)^\star = a - bi - cj - dk$. Consider the subfield $F = \Rationals + \Rationals i + \Rationals j + \Rationals k$ of $\Quaternions$. Equipped with $^\star$, $F$ is a $\star$-sfield as well.

For any $\alpha \in F$, $\alpha \alpha^\star$ is the sum of squares of four rational numbers. Indeed, for $\alpha = a + bi + cj + dk$ we have $\alpha \alpha^\star = a^2 + b^2 + c^2 + d^2$. It follows that $F$ is formally real. Moreover, for any $\alpha, \beta \in F$, $\alpha \alpha^\star + \beta \beta^\star$ is the sum of squares of eight rational numbers and hence of the form $\frac{r_1^2 + \ldots + r_8^2}{t^2}$ for some $r_1, \ldots, r_8, t \in \Naturals$. By Lagrange's four square theorem, there are $s_1, s_2, s_3, s_4 \in \Naturals$ such that $\alpha \alpha^\star + \beta \beta^\star = \frac{s_1^2 + s_2^2 + s_3^2 + s_4^2}{t^2}$. Putting $\gamma = \frac 1 t (s_1 + i s_2 + j s_3 + k s_3)$, we thus get $\alpha \alpha^\star + \beta \beta^\star = \gamma \gamma^\star$. Hence $F$ is Pythagorean.

Finally, the Hilbert field $R$ does not embed into $F$. Indeed, otherwise there would be some $\alpha = a + bi + cj + dk \in F$ such that $\alpha^2 = 2$. But this means $a^2 - b^2 - c^2 - d^2 + 2a(bi + cj + dk) = 2$. Then $a = 0$ implies $-b^2-c^2-d^2 = 2$, and $a \neq 0$ implies $b = c = d = 0$ and hence $a^2 = 2$, both a contradiction.
\end{example}

For $n \geq 4$, assume that there is a non-Boolean line-symmetric orthoset $(\check X,\perp)$ with the property that is embeds into any other non-Boolean line-symmetric orthoset of rank $n$. In accordance with Corollary~\ref{cor:line-symmetric-orthosets}, let $\check F$ be the Pythagorean formally real $\star$-sfield such that $(\check X,\perp)$ is isomorphic to $(P({\check F}^n),\perp)$. Then $(P({\check F}^n),\perp)$ embeds into $(P(R^n),\perp)$, where $R$ is the Hilbert field. By Proposition~\ref{prop:homomorphism-between-scalar-sfields}, $\check F$ is a field equipped with the identity involution and there is a homomorphism $\sigma \colon \check F \to R$. In particular, $\check F$ is a Pythagorean formally real field as well. If $\sigma$ was not surjective, $R$ would contain a proper subfield that is Pythagorean and formally real, contradicting Proposition~\ref{prop:Hilbert-field}. Hence $\check F$ is isomorphic to $R$. Again by Corollary~\ref{cor:line-symmetric-orthosets} and Proposition~\ref{prop:homomorphism-between-scalar-sfields}, this means that $R$ embeds into any other Pythagorean formally real $\star$-sfield. However, by Example~\ref{ex:no-minimal-line-symmetric-orthsets}, this is not the case. We conclude that there is no line-symmetric orthoset with the indicated minimality property.

Finally, let us consider the whole class of linear orthosets.

Recall that the {\it u-invariant} $u(F)$ of a field $F$ is the largest dimension that a quadratic space over $F$ may have \cite[Chapter XI, §6]{Lam}.

\begin{example} \label{ex:no-minimal-linear-orthsets}
For a prime number $p > 2$, let $F$ be an algebraically closed field of characteristic $p$. For any $n \geq 1$, let $F_{(n)}$ be the $n$-fold iterated Laurent field, that is, $F_{(n)} = F((t_1))((t_2)) \ldots ((t_n))$. Clearly, $F_{(n)}$ has likewise characteristic $p$. Moreover, $u(F_{(n)}) = 2^n$ \cite[Example 6.2(7)]{Lam} and this means that there is a $2^n$-dimensional quadratic space over $F_{(n)}$.
\end{example}

For $n \geq 4$, assume that there is a linear orthoset $(\check X,\perp)$ of rank $n$ with the property that is embeds into any other linear orthoset of rank $n$. In accordance with Theorem~\ref{thm:linear-Hermitian}, let $\check F$ be the $\star$-sfield such that $(\check X,\perp)$ is isomorphic to $(P({\check H}),\perp)$, where $\check H$ is an $n$-dimensional Hermitian space over $\check F$. Then $(\check X,\perp)$ embeds into $(P(H),\perp)$ whenever $H$ is a Hermitian space over some $\star$-sfield $F$. By Proposition~\ref{prop:homomorphism-between-scalar-sfields}, $\check F$ embeds in this case into $F$. However, Example~\ref{ex:no-minimal-linear-orthsets} shows that there are quadratic spaces over fields of any characteristic $\neq 2$ and of arbitrary dimension. That is, $F$ may have an arbitrary characteristic $\neq 2$. We conclude that there is no linear orthoset of rank $n$ that is, in the sense indicated above, minimal.

\subsection*{Acknowledgements} J.P.\ and T.V.'s research was funded in part by the Austrian Science Fund (FWF) 10.55776/ PIN5424624 and the Czech Science Foundation (GACR) 25-20013L. M.K.\ acknowledges the support by the Austrian Science Fund (FWF): project I~4579-N and the Czech Science Foundation (GA\v CR): project 20-09869L.

\end{document}